\documentclass[a4paper, reqno]{amsart} 
\usepackage[margin=3.5cm]{geometry}

\usepackage{newlfont, color}   
\usepackage{amsthm, amssymb, amsmath, latexsym}
\usepackage{graphicx}
\usepackage{amsfonts, mathrsfs}
\usepackage{esint}
\usepackage{comment}

\theoremstyle{plain}                     
\begingroup
\newtheorem{teo}{Theorem}[section]

\endgroup
      
\theoremstyle{definition}                
\begingroup

\newtheorem{oss}[teo]{Remark}
\endgroup

\newcommand{\ep}{\varepsilon}
\newcommand{\Oe}{\Omega_\ep}

\newcommand{\R}{\mathbb R}

\newcommand{\C}{\mathbb C}
\newcommand{\M}{\mathbb M}
\newcommand{\ac}{r_\C}
\newcommand{\bc}{R_\C}
\newcommand{\rk}{r_K}
\newcommand{\Rk}{R_K}
\newcommand{\B}[1]{B(#1)}
\newcommand{\A}[0]{\mathbb{A}}

\newcommand{\dist}{\mathrm{dist}}

\newcommand{\sym}{\mathrm{sym}}

\newcommand{\md}{\mathbb M^{3\times 3}_D}

\newcommand{\mthree}{\mathbb{M}^{3\times 3}}

\newcommand{\deb}{\rightharpoonup}

\newcommand{\intom}[1]{\int_{\Omega}{#1\, dx}}
\newcommand{\intome}[1]{\int_{\Omega_{\ep}}{#1\, dx}}
\newcommand{\intomm}[1]{\int_{\omega}{#1\, dx'}}
\newcommand{\intt}[1]{\int_{-\frac{1}{2}}^{\frac{1}{2}}{#1\, dx_3}}

\newcommand{\pep}[0]{\phi^{\ep}}

\newcommand{\nep}[0]{\nabla_{\ep}}

\newcommand{\vep}[0]{\varphi^{\ep}}

\newcommand{\twh}[0]{\widetilde{W}_{hard}}

\newcommand{\tr}[1]{\text{tr }#1}

\newcommand{\m}[1]{(#1)'}
\newcommand{\pzero}{\bar p}
\newcommand{\puno}{\hat p}
\newcommand{\pdue}{p_\perp}

\numberwithin{equation}{section}

\frenchspacing 
 
\title[Linearized plastic plate models derived via $\Gamma$-convergence]{Linearized plastic plate models as $\Gamma$-limits of 3D finite elastoplasticity} 
 
\author[E. Davoli]{Elisa Davoli} 

\address[E. Davoli]{Scuola Internazionale Superiore di Studi Avanzati, via Bonomea 265, 34136 Trieste (Italy); current address: Department of Mathematical Sciences, Carnegie-Mellon University, Pittsburgh, PA, USA}
\email{davoli@sissa.it; edavoli@andrew.cmu.edu}

\subjclass[2000]{74C05 (74G65, 74K20, 49J45)}
\keywords{Finite plasticity, thin plates, $\Gamma$-convergence.}
 
\begin{document}
\begin{abstract}
The subject of this paper is the rigorous derivation of reduced models for a thin plate by means of $\Gamma$-convergence, in the framework of finite plasticity. Denoting by $\ep$ the thickness of the plate, we analyse the case where the scaling factor of the elasto-plastic energy is of order $\ep^{2\alpha-2}$, with $\alpha\geq 3$. According to the value of $\alpha$, partially or fully linearized models are deduced, which correspond, in the absence of plastic deformation, to the Von K\'arm\'an plate theory and the linearized plate theory.  
\end{abstract}
\maketitle
\section{Introduction}
The rigorous identification of lower dimensional models for thin structures is a classical question in mechanics. In the early 90's a rigorous approach to dimension reduction problems has emerged in the framework of nonlinear elasticity (\cite{A-B-P, L-R}). This approach is based on $\Gamma$-convergence: a variational convergence which guarantees, roughly speaking, convergence of minimizers of the three-dimensional energies to minimizers of the reduced models. In the seminal papers \cite{FJM, FJM2}, a hierarchy of limit models has been identified  by $\Gamma$-convergence methods for nonlinearly elastic thin plates. Different limit models have been deduced according to the scaling of the applied body forces in terms of the thickness parameter. In particular, high scalings of the applied forces lead at the limit to linearized models.

The purpose of this paper is to deduce some linearized reduced models for thin plates in the framework of finite plasticity. {{We remark that different schools in finite plasticity are still competing and a generally accepted model is still lacking (see e.g. \cite{Ber}). We shall adopt here a mathematical model introduced in \cite{CHM, Men, M}. }}We shall consider a three-dimensional plate of small thickness, whose elastic behaviour is nonlinear and whose plastic response is that of finite plasticity with hardening. We assume that the reference configuration of the plate is the set
$$\Omega_{\ep}:=\omega\times \big(-\tfrac{\ep}{2},\tfrac {\ep}{2}\big),$$
where $\omega$ is a domain in $\R^2$ and $\ep>0$ is the thickness parameter. {{Following the lines of  \cite{Lee} and \cite{Man},}} we consider deformations $\varphi\in W^{1,2}(\Oe;\R^3)$ that fulfill the multiplicative decomposition
$$\nabla \varphi(x)=F_{el}(x)F_{pl}(x)\quad\text{for a.e. }x\in\Oe,$$
where $F_{el}\in L^2(\Oe;\mthree)$ represents the elastic strain, $F_{pl}\in L^2(\Oe;SL(3))$ is the plastic strain and $SL(3):=\{F\in\mthree: \det F=1\}.$ { To guarantee coercivity in the plastic strain variable, we suppose to be in a hardening regime. More precisely, }the stored energy associated to a deformation $\varphi$ and to its elastic and plastic strains is expressed as follows:
\begin{eqnarray*}
\nonumber \cal{E}(\varphi,F_{pl})&:=&\intome{W_{el}(\nabla \varphi(x) F_{pl}^{-1}(x))}+\intome{W_{hard}(F_{pl}(x))}\\
&=&\intome{W_{el}(F_{el}(x))}+\intome{W_{hard}(F_{pl}(x))},
\end{eqnarray*}
where $W_{el}$ is a frame-indifferent elastic energy density and $W_{hard}$, which is finite only on a compact subset of $SL(3)$ having the identity as an interior point, describes hardening.
The plastic dissipation is expressed by means of a dissipation distance $D:\mthree\times\mthree\to [0,+\infty]$, which is given via a positively 1-homogeneous potential $H_D$, and represents the minimum amount of energy that is dissipated when the system moves from a plastic configuration to another one (see Section \ref{prel}).

We are interested in studying the asymptotic behaviour of sequences of pairs $(\vep, F_{pl}^{\ep})$ whose total energy per unit thickness satisfies
\begin{equation}
\label{eqintro}
\frac{1}{\ep}\Big(\cal{E}(\vep, F_{pl}^{\ep})+\ep^{\alpha-1}\intome{D(F^{\ep, 0}_{pl}, F^{\ep}_{pl})}\Big)\leq C\ep^{2\alpha-2},
\end{equation}
where $\alpha\geq 3$ is a positive parameter and $(F^{\ep, 0})\subset L^2(\Omega_{\ep};SL(3))$ is a given sequence representing preexistent plastic strains. It was proved in \cite{FJM2} that in the absence of plastic deformation $(F^{\ep,0}=F_{pl}=Id)$ these energy scalings lead to the Von K\'arm\'an plate theory for $\alpha=3$ and to the linear plate theory for $\alpha>3$. The scaling of the dissipation energy is motivated by its linear growth (see \eqref{prd2}). In analogy with the results of \cite{FJM2} in the framework of nonlinear elasticity, we expect these scalings to correspond to partially or fully linearized plastic models. 

On a portion of the lateral surface $$\gamma_d\times \big(-\tfrac{\ep}{2},\tfrac{\ep}{2}\big),$$
where $\gamma_d\subset \partial \omega$ {{has positive $\cal{H}^1$- measure}}, we prescribe a boundary datum 
$$\phi^{\ep}(x):=\Big(\begin{array}{c}x'\\x_3\end{array}\Big)+\Big(\begin{array}{c}\ep^{\alpha-1}u^0(x')\\\ep^{\alpha-2}v^0(x')\end{array}\Big)-\ep^{\alpha-2}x_3\nabla v^0(x')\\
$$
for $x=(x', \ep x_3)\in\Omega_{\ep}$, 
where $u^0\in W^{1,\infty}(\omega;\R^2)$ and $v^0\in W^{2,\infty}(\omega)$. This structure of the boundary conditions is compatible with that of the minimal energy configurations in the absence of plastic deformations (see Remark \ref{whyalpha}). 

Assuming $\ep\to 0$, we first show that, given any sequence of pairs $(\vep, F_{pl}^{\ep})$ satisfying \eqref{eqintro} and the boundary conditions
\begin{equation}
\label{bdcintro}
\vep=\pep\quad\cal{H}^2\text{ - a.e. on }\gamma_d\times \big(-\tfrac {\ep}{2},\tfrac {\ep}{2}\big),
\end{equation}
 the deformations $\vep$ converge to the identity deformation on the mid-section of the plate, and the plastic strains $F_{pl}^{\ep}$ tend to the identity matrix. More precisely, defining $\Omega:=\omega\times\big(-\tfrac12,\tfrac12\big)$ and $\psi^{\ep}(x):=(x',\ep x_3)$ for every $(x',x_3)\in\overline{\Omega}$, and assuming 
 $$F^{\ep, 0}_{pl}\circ \psi^{\ep}=Id+\ep^{\alpha-1}p^{\ep,0}$$
 with
 \begin{equation}
 \label{hpindat}
 p^{\ep,0}\deb p^0\quad\text{weakly in }L^2(\Omega;\mthree),
 \end{equation}
 we show that
$$y^{\ep}:=\vep\circ \psi^{\ep}\to\Big(\begin{array}{c}x'\\0\end{array}\Big)$$
strongly in $W^{1,2}(\Omega;\R^3)$ and
$$P^{\ep}:=F_{pl}^{\ep}\circ \psi^{\ep}\to Id$$
strongly in $L^2(\Omega;\mthree)$.
To express the limit functional, we introduce and study the compactness properties of some linearized quantities associated with the scaled deformations and plastic strains: the in-plane displacements
$$u^{\ep}(x'):=\frac{1}{\ep^{\alpha-1}}\intt{\Big(\Big(\begin{array}{c}y^{\ep}_1\\y^{\ep}_2\end{array}\Big)-x'\Big)}$$
for a.e. $x'\in\omega$, the out-of-plane displacements
$$v^{\ep}(x'):=\frac{1}{\ep^{\alpha-2}}\intt{y^{\ep}_3(x)},$$
for a.e. $x'\in\omega$, and the linearized plastic strains
$$p^{\ep}(x):=\frac{P^{\ep}(x)-Id}{\ep^{\alpha-1}}$$
for a.e. $x\in\Omega$. In Theorem \ref{liminfineq} we show that, under assumptions \eqref{eqintro}, \eqref{bdcintro} and \eqref{hpindat} 
\begin{eqnarray*}
&&u^{\ep}\deb u\quad\text{weakly in }W^{1,2}(\omega;\R^2),\\
&&v^{\ep}\to v\quad\text{strongly in }W^{1,2}(\omega),\\
&&p^{\ep}\deb p\quad\text{weakly in }L^2(\Omega;\mthree),
\end{eqnarray*}
for some $u\in W^{1,2}(\omega;\R^2)$, $v\in W^{2,2}(\omega)$ and $p\in L^2(\Omega;\mthree)$ such that $\tr{p}=0$, and
$$u=u^0,\quad v=v^0,\quad\nabla v=\nabla v^0\quad \cal{H}^1\text{-a.e. on }\gamma_d.$$
In Theorems \ref{liminfineq}, \ref{limsupineq} and \ref{minpts} we show that the $\Gamma$-limit functional is expressed in terms of the limit quantities $u,v$ and $p$, and is given by
\begin{eqnarray*}
\cal{J}_{\alpha}(u,v,p):=\intom{Q_2(\sym\nabla' u-x_3(\nabla')^2 v-p')}+\intom{\B{p}}+\intom{H(p-p^0)}
\end{eqnarray*}
for $\alpha>3$, and
\begin{eqnarray*}
\cal{J}_3(u,v,p)&:=&\intom{Q_2\big(\sym\nabla' u+\tfrac{1}{2}\nabla' v\otimes\nabla' v-x_3(\nabla')^2 v-p'\big)}+\intom{\B{p}}\\&+&\intom{H(p-p^0)}
\end{eqnarray*}
for $\alpha=3$.
In the previous formulas, $\nabla'$ denotes the gradient with respect to $x'$, $p'$ is the $2\times 2$ minor given by the first two rows and columns of the map $p$, and $Q_2$ and $B$ are positive definite quadratic forms on $\M^{2\times 2}$ and $\mthree$, respectively, for which an explicit characterization is provided (see Sections \ref{prel} and \ref{comp}). In the absence of plastic dissipation $(p^0=p=0)$ the two $\Gamma$-limits reduce to the functionals deduced in \cite{FJM2} in the context of nonlinear elasticity. 

We remark, anyway, that in constrast with the problem studied in \cite{FJM2}, the limit functionals $\cal{J}_{\alpha}$ and $\cal{J}_3$ cannot be, in general, expressed in terms of two-dimensional quantities only because the limit plastic strain $p$ depends nontrivially on the $x_3$ variable (see Section \ref{finsec}).\\

The setting of the problem and some proof arguments are very close to those of \cite{MS}, where it is shown that linearized plasticity can be obtained as $\Gamma$-limit of finite plasticity. 

The proof of the compactness and the liminf inequality relies on the rigidity estimate due to Friesecke, James and M\"uller (\cite[Theorem 3.1]{FJM}). This theorem can be applied owing to the presence of the hardening term, which provides one with a uniform bound on the $L^{\infty}$ norm of the scaled plastic strains $P^{\ep}$. The construction of the recovery sequence is obtained by combining the arguments in \cite[Sections 6.1 and 6.2]{FJM2} and \cite[Lemma 3.6]{MS}. For this construction we need to assume that { $\gamma_d$ is the finite union of disjoint (nontrivial) closed intervals (i.e., maximally connected sets) in  $\partial \omega$, }the convergence in \eqref{hpindat} is strong in $L^1(\Omega;\mthree)$ and the maps $p^{\ep,0}$ are uniformly bounded in $L^{\infty}(\Omega;\mthree)$.

In \cite{MS} the authors proved also the convergence of quasistatic evolutions in finite plasticity to a quasistatic evolution in linearized plasticity. The question whether an analogous convergence result can be established in the present context for thin plates is adressed in \cite{D2}.\\

The paper is organized as follows: in Section \ref{prel} we recall some preliminary results and we discuss the formulation of the problem. Section \ref{comp} is devoted to prove some compactness results and liminf inequalities, while in Section \ref{rec} we show that the lower bounds obtained in Section \ref{comp} are optimal. Finally, in Section \ref{finsec} we deduce convergence of almost minimizers of the three-dimensional energies to minimizers of the limit functionals and we discuss some examples.
\section{Preliminaries and setting of the problem}
\label{prel}
Let $\omega\subset \R^2$ be a connected, bounded open set with Lipschitz boundary. Let $\ep>0$. We assume the set $\Omega_{\ep}:=\omega\times\big(-\tfrac \ep2,\tfrac \ep2\big)$ to be the reference configuration of a finite-strain elastoplastic plate.

 We suppose that the boundary $\partial\omega$ is partitioned into the union of two disjoint sets $\gamma_d$ and $\gamma_n$ and their common boundary, where $\gamma_d$ is { such that $\cal{H}^1(\gamma_d)>0$}. We denote by $\Gamma_{\ep}$ the portion of the lateral surface of the plate given by $\Gamma_{\ep}:=\gamma_d\times\big(-\tfrac \ep2,\tfrac \ep2\big)$. On $\Gamma_{\ep}$ we prescribe a boundary datum of the form
\begin{equation}
\label{defbddat}
\phi^{\ep}(x):=\Big(\begin{array}{c}x'\\x_3\end{array}\Big)+\Big(\begin{array}{c}\ep^{\alpha-1}u^0(x')\\\ep^{\alpha-2}v^0(x')\end{array}\Big)-\ep^{\alpha-2}x_3\nabla v^0(x')
\end{equation}
for $x=(x',\ep x_3)\in\Omega_{\ep}$, 
where $u^0\in W^{1,\infty}(\omega;\R^2)$, $v^0\in W^{2,\infty}(\omega)$ and $\alpha\geq 3$. 

We assume that every deformation $\varphi\in W^{1,2}(\Oe;\R^3)$ fulfills the multiplicative decomposition
$$\nabla \varphi(x)=F_{el}(x)F_{pl}(x)\quad\text{for a.e. }x\in\Oe,$$
where $F_{el}\in L^2(\Oe;\mthree)$ represents the elastic strain, $F_{pl}\in L^2(\Oe;SL(3))$ is the plastic strain and $SL(3):=\{F\in\mthree: \det F=1\}.$ The stored energy associated to a deformation $\varphi$ and to its elastic and plastic strains can be expressed as follows:
\begin{eqnarray}
\nonumber \cal{E}(\varphi,F_{pl})&:=&\intome{W_{el}(\nabla \varphi(x) F_{pl}^{-1}(x))}+\intome{W_{hard}(F_{pl}(x))}\\
\label{nsenergy}&=&\intome{W_{el}(F_{el}(x))}+\intome{W_{hard}(F_{pl}(x))},
\end{eqnarray}
where $W_{el}$ is the elastic energy density and $W_{hard}$ describes hardening.\\
{\bf Properties of the elastic energy}\\
We assume that $W_{el}:\mthree\to [0,+\infty]$ satisfies
\begin{itemize}
\item[(H1)] $W_{el}\in C^1(\mthree_+),\quad W_{el}\equiv +\infty \text{ on }\mthree\setminus \mthree_+$,
\item [(H2)] $W_{el}(Id)=0,$
\item  [(H3)] $W_{el}(RF)=W_{el}(F)\quad\text{for every }R\in SO(3),\, F\in \mthree_+,$
\item [(H4)] $W_{el}(F)\geq c_1 \dist^2(F;SO(3))\quad\text{for every }F\in \mthree_+,$
\item [(H5)] $|DW_{el}(F)F^T|\leq c_2(W_{el}(F)+1)\quad\text{for every }F\in\mthree_+.$
\end{itemize}
Here $c_1,c_2$ are positive constants, $\mthree_+:=\{F\in\mthree:\, \det F>0\}$ and $SO(3):=\{F\in\mthree_+:\, F^T F=Id\}$. We also assume that there exists a symmetric, positive semi-definite tensor $\C:\mthree\to \mthree$ such that, setting
$$Q(F):=\frac{1}{2}\C F:F\quad\text{for every }F\in\mthree,$$
the quadratic form $Q$ encodes the local behaviour of $W_{el}$ around the identity, namely
\begin{equation}
\label{quadrwel}
 \forall \delta>0\,\, \exists c_{el}(\delta)>0 \text{ such that }\forall F\in B_{c_{el}(\delta)}(0)\text{ there holds }|W_{el}(Id+F)-Q(F)|\leq \delta |F|^2.
\end{equation}
We note that \eqref{quadrwel} implies in particular that
$$\C =D^2W_{el}(Id),\quad \C_{ijkl}=\frac{\partial^2 W}{\partial F_{ij}\partial F_{kl}}(Id)\text{ for every }i,j,k,l\in\{1,2,3\}.$$
As remarked in \cite[Section 2]{MS}, the frame-indifference condition (H3) yields that
$$\C_{ijkl}=\C_{jikl}=\C_{ijlk}\text{ for every }i,j,k,l\in\{1,2,3\}$$
and
$$\C F=\C\, (\sym\, F) \quad\text{for every }F\in\mthree.$$
Hence, the quadratic form $Q$ satisfies:
$$Q(F)=Q(\sym\,F)\quad\text{for every }F\in\mthree$$
and by (H4) it is positive definite on symmetric matrices. Therefore, there exist two constants $\ac$ and $\bc$ such that 
\begin{equation}
\label{growthcondQ}
\ac |F|^2\leq Q(F)\leq \bc |F|^2\quad\text{for every }F\in\mthree_{\sym},
\end{equation}
and
\begin{equation}
\nonumber
|\C F|\leq 2\bc |F| \quad\text{for every }F\in\mthree_{\sym}.
\end{equation}

\noindent{\bf Properties of the hardening functional}\\
We assume that the hardening map $W_{hard}:\mthree\to [0,+\infty]$ is of the form
\begin{equation}
\label{prh1}
W_{hard}(F):=\begin{cases}\twh(F)&\text{for every }F\in K,\\
+\infty&\text{otherwise}.
\end{cases}
\end{equation}
Here $K$ is a compact set in $SL(3)$ that contains the identity as a relative interior point, and the map $\twh:\mthree\to [0,+\infty)$ fulfills
\begin{eqnarray}
\nonumber &&\twh\text{ is locally Lipschitz continuous},\\
\label{prh3} && \twh(Id+F)\geq c_3 |F|^2\quad\text{for every }F\in\mthree,
\end{eqnarray}
where $c_3$ is a positive constant.
We also assume that there exists a positive semi-definite quadratic form $B$ such that
\begin{eqnarray}
\nonumber && \forall \delta>0\, \exists c_h(\delta)>0 \text{ such that }\forall F\in B_{c_h(\delta)}(0)\text{ there holds }|\twh(Id+F)-\B{F}|\leq \delta \B{F}.\\
 \label{prh4}
\end{eqnarray} 
In particular, by the hypotheses on $K$ there exists a constant $c_k$ such that
\begin{eqnarray}
\label{prk1}&& |F|+|F^{-1}|\leq c_k\quad\text{for every }F\in K,\\
\label{prk2}&& |F-Id|\geq \frac{1}{c_k}\quad\text{for every }F\in SL(3)\setminus K.
\end{eqnarray}
Combining \eqref{prh3} and \eqref{prh4} we deduce also
\begin{equation}
\label{grbelowh}
\frac{c_3}{2} |F|^2\leq \B{F}\quad\text{for every }F\in\mthree.
\end{equation}
{\bf Dissipation functional}\\
Denote by $\md$ the set of trace-free symmetric matrices, namely
$$\md:=\{F\in\mthree_{\sym}: \tr F=0\}.$$
Let $H_{D}:\md \to [0,+\infty)$ be a convex, positively one-homogeneous function such that
\begin{equation}
\label{growthh}
\rk |F|\leq H_{D}(F)\leq \Rk |F|\quad\text{for every }F\in\md.
\end{equation}
We define the dissipation potential $H:\mthree\to [0,+\infty]$ as
\begin{equation}
\nonumber
H(F):=\begin{cases}H_{D}(F)&\text{if }F\in \md,\\
+\infty &\text{otherwise.}\end{cases}
\end{equation}
For every $F\in\mthree$, we consider the quantity
\begin{equation}
\label{distid}
D(Id,F):=\inf \Big\{\int_0^1{H(\dot{c}(t)c^{-1}(t))\,dt}: c\in C^1([0,1];\mthree_+),\, c(0)=Id,\, c(1)=F \Big\}.
\end{equation}
Note that {by the Jacobi's formula for the derivative of the determinant of a differentiable matrix-valued map} if $D(Id, F)<+\infty$, then $F\in SL(3)$.

We define the dissipation distance as the map $D:\mthree\times \mthree\to [0,+\infty]$, given by 
$$D(F_1,F_2):=\begin{cases}D(Id, {F_2}F_1^{-1})& \text{if }F_1\in\mthree_{+}, F_2\in\mthree\\ +\infty& \text{if }F_1\notin \mthree_{+}, F_2\in\mthree. 
\end{cases}$$
We note that the map $D$ satisfies the triangle inequality
\begin{equation}
\label{triang}
D(F_1,F_2)\leq D(F_1,F_3)+D(F_3,F_2)
\end{equation}
for every $F_1,F_2, F_3\in\mthree$.
\begin{oss}
 We remark that there exists a positive constant $c_4$ such that
\begin{eqnarray}
\label{prd1} &&D(F_1,F_2)\leq c_4\quad\text{for every }F_1,F_2\in K,\\
\label{prd2} && D(Id,F)\leq c_4|F-Id|\quad\text{for every }F\in K.
\end{eqnarray}
Indeed, by the compactness of $K$ and the continuity of the map $D$ on $SL(3)\times SL(3)$ (see \cite{M}), there exists a constant $\tilde{c}_4$ such that 
\begin{equation}
\label{quasiprd1}
D(F_1,F_2)\leq \tilde{c}_4\quad\text{for every }F_1,F_2\in K.
\end{equation} 
By the previous estimate, \eqref{prd2} needs only to be proved in a neighbourhood of the identity. More precisely, let $\delta>0$ be such that $\log F$ is well defined for $F\in K$ and $|F-Id|<\delta$. If $F\in K$ is such that $|F-Id|\geq \delta$, by \eqref{quasiprd1} we deduce
$$D(Id,F)\leq \frac{\tilde{c}_4}{\delta}|F-Id|.$$
If $|F-Id|<\delta$, taking $c(t)=\exp({t\log F})$ in \eqref{distid}, inequality \eqref{growthh} yields
$$D(Id,F)\leq H_{D}(\log F)\leq \Rk |\log F|\leq C|F-Id|$$
for every $F\in K$. Collecting the previous estimates we deduce \eqref{prd1} and \eqref{prd2}.
\end{oss}
\noindent{\bf Change of variable and formulation of the problem}\\
As usual in dimension reduction problems we perform a change of variable to formulate the problem on a domain independent of $\ep$. We consider the set $\Omega:=\omega \times \big(-\tfrac 12, \tfrac 12\big)$ and the map $\psi^{\ep}:\overline{\Omega}\to \overline{\Omega}_{\ep}$ given by 
$$\psi^{\ep}(x):=(x',\ep x_3)\quad\text{for every }x\in\overline{\Omega}.$$
To every deformation $\varphi \in W^{1,2}(\Oe;\R^3)$ satisfying 
$$\varphi(x)=\phi^{\ep}(x)\quad \cal{H}^2\text{- a.e. on }\Gamma_{\ep}$$
 and to every plastic strain $F_{pl}\in L^2(\Oe;SL(3))$ we associate the scaled deformation $y:=\varphi\circ \psi^{\ep}$ and the scaled plastic strain $P:=F_{pl}\circ \psi^{\ep}$. Denoting by $\Gamma_d$ the set $\gamma_d\times \big(-\tfrac 12, \tfrac 12\big),$ the scaled deformation satisfies the boundary condition 
\begin{equation}
\label{bddatum}
y(x)=\phi^{\ep}(x',\ep x_3)\quad\cal{H}^2\text{- a.e. on }\Gamma_d. 
\end{equation}
Applying this change of variable to \eqref{nsenergy}, the energy functional is now given by
$$\cal{I}(y,P):=\frac{1}{\ep}\cal{E}(\varphi,F_{pl})=\intom{W_{el}(\nep y(x) P^{-1}(x))}+\intom{W_{hard}(P(x))},$$
where $\nep y(x):=\big(\partial_1 y(x)\big|\partial_2 y(x)\big|\frac{1}{\ep} \partial_3 y(x)\big)$ for a.e. $x\in\Omega$. 

Denote by $\cal{A}_{\ep}(\phi^{\ep})$ the class of pairs $(y^{\ep},P^{\ep})\in W^{1,2}(\Omega;\R^3)\times L^2(\Omega;SL(3))$ such that \eqref{bddatum} is satisfied. 
We associate to each pair $(y^{\ep},P^{\ep})\in \cal{A}_{\ep}(\phi^{\ep})$ the scaled energy given by 
\begin{eqnarray}
&&\cal{J}^{\ep}_{\alpha}(y^{\ep},P^{\ep}):=
\frac{1}{\ep^{2\alpha-2}}\cal{I}(y^{\ep},P^{\ep})+\frac{1}{\ep^{\alpha-1}}\intom{D({P}^{\ep,0},P^{\ep})},
\label{scaleden}
\end{eqnarray}
where $\alpha\geq 3$ is the same exponent as in \eqref{defbddat} and $P^{\ep,0}$ is a map in $L^2(\Omega; SL(3))$, which represents a preexistent plastic strain.
\begin{oss}
\label{whyalpha}
We are interested in studying the asymptotic behaviour of sequences of pairs $(y^{\ep},P^{\ep})\in \cal{A}_{\ep}(\phi^{\ep})$ such that the scaled total energies $\cal{J}^{\ep}_{\alpha}(y^{\ep},P^{\ep})$ are uniformly bounded. This, in particular, holds for sequences of (almost) minimizers of
\begin{equation}
\label{funct}
\cal{I}(y,P)-\intom{f^{\ep}\cdot y},
\end{equation}
whenever the applied forces $f^{\ep}$ are of order $\ep^{\alpha}$, with $\alpha\geq 3$. In fact by \cite[Theorem 2]{FJM2}, in the absence of plastic deformation ($P^{\ep}\equiv Id$), the elastic energy on (almost) minimizing sequences scales like $\ep^{2\alpha-2}$. In order to have interaction between the elastic and the plastic energy at the limit we are lead to rescale also the hardening functional by $\ep^{2\alpha-2}$. Finally, the scaling of the dissipation functional is motivated by its linear growth and by the estimate \eqref{prd2}.

Our choice of the boundary datum is again motivated by \cite[Theorem 2]{FJM2}. Indeed, as remarked in the introduction, the structure of $\pep$ is compatible with the structure of (almost) minimizers of \eqref{funct} in absence of plastic deformation, as $\ep\to 0^+$.
\end{oss}
\section{Compactness results and liminf inequality}
\label{comp}
In this section we study compactness properties of sequences of pairs in $\cal{A}_{\ep}(\phi^{\ep})$ satisfying the uniform energy estimate
\begin{equation}
\label{engest}
\cal{J}^{\ep}_{\alpha}(y^{\ep},P^{\ep})\leq C\quad\text{for every }\ep.
\end{equation}

To state the compactness results it is useful to introduce the following notation: given $\varphi:\Omega\to \R^3$, we denote by $\varphi':\Omega\to \R^2$ the map 
$$\varphi':=\Big(\begin{array}{c}\varphi_1\\\varphi_2\end{array}\Big)$$
and for every $\eta\in W^{1,2}(\Omega)$ we denote by $\nabla'\eta$ the vector $\Big(\begin{array}{c}\partial_1 \eta\\\partial_2\eta\end{array}\Big)$.
Analogously, given a matrix $M\in \mthree$, we use the notation $M'$ to represent the minor
$$M':=\Big(\begin{array}{cc}M_{11}&M_{12}\\M_{21}&M_{22}\end{array}\Big).$$

Given a sequence of deformations $(y^{\ep})\subset W^{1,2}(\Omega;\R^3)$, we consider some associated quantities: the in-plane displacements
\begin{equation}
\label{inplane}
u^{\ep}(x'):=\frac{1}{\ep^{\alpha-1}}\intt{\big(\m{{y}^{\ep}}(x',x_3)-x'\big)}\quad\text{for a.e. }x'\in \omega,
\end{equation}
the out-of-plane displacements
\begin{equation}
\label{outofplane}
v^{\ep}(x'):=\frac{1}{\ep^{\alpha-2}}\intt{y^{\ep}_3(x',x_3)}\quad\text{for a.e. }x'\in \omega,
\end{equation}
and the first order moments
\begin{equation}
\label{1stmoment}
{\xi}^{\ep}(x'):=\frac{1}{\ep^{\alpha-1}}\intt{x_3\Big({y}^{\ep}(x',x_3)-\Big(\begin{array}{c}x'\\\ep x_3\end{array}\Big)\Big)}\quad\text{for a.e. }x'\in\omega.
\end{equation}

A key tool to establish compactness of in-plane and out-of-plane displacements is the following rigidity estimate due to Friesecke, James and M\"uller  \cite[Theorem 3.1]{FJM}.
\begin{teo}
\label{fjm}
Let $U$ be a bounded Lipschitz domain in $\mathbb{R}^n$, $n\geq 2$. Then there exists a constant $C(U)$ with the following property: for every $v\in W^{1,2}(U;\mathbb{R}^n)$ there is an associated rotation $R\in SO(n)$ such that
$$\| \nabla v-R\|_{L^2(U)}\leq C(U)\|\dist(\nabla v, SO(n))\|_{L^2(U)}.$$
\end{teo}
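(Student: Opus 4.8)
The plan is to carry out the proof of Friesecke, James and M\"uller; I sketch its structure. Write $d:=\|\dist(\nabla v,SO(n))\|_{L^2(U)}$. If $d$ exceeds some fixed constant, any $R\in SO(n)$ makes the inequality hold trivially with a large constant, so we may assume $d$ is small. First I would reduce to maps with a uniformly bounded gradient. Since $\dist(F,SO(n))\geq|F|-\sqrt n$ whenever $|F|\geq\sqrt n$, the set $\{|\nabla v|>\lambda\}$ is contained in $\{\dist(\nabla v,SO(n))>\lambda-\sqrt n\}$, on which $|\nabla v|\leq 2\,\dist(\nabla v,SO(n))$ as soon as $\lambda$ is a large enough multiple of $\sqrt n$. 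Fixing such a dimensional $\lambda$ and invoking the Lipschitz truncation theorem for Sobolev maps, one gets $w\in W^{1,\infty}(U;\R^n)$ with $\|\nabla w\|_{L^\infty}\leq c(n)$, $|\{v\neq w\}|\leq c(n)\,d^2$, $\|\nabla v-\nabla w\|_{L^2(U)}\leq c(n)\,d$, and hence also $\|\dist(\nabla w,SO(n))\|_{L^2(U)}\leq c(n)\,d$. It thus suffices to prove the estimate for $w$.

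The core is the case of a uniformly bounded gradient, which I would address by a compactness contradiction. If the estimate fails for bounded maps $w_k$ with constants $k\to\infty$, then after subtracting mean values $(w_k)$ is bounded in $W^{1,\infty}(U;\R^n)$ and $\|\dist(\nabla w_k,SO(n))\|_{L^2(U)}\to0$, so along a subsequence $\nabla w_k\rightharpoonup\nabla w$ in $L^2(U)$. Since $\det F\to1$ and $\mathrm{cof}\,F-F\to0$ as $F\to SO(n)$ uniformly on bounded sets, the weak continuity of the minors $\det\nabla w_k$ and $\mathrm{cof}\,\nabla w_k$ forces $\det\nabla w=1$ and $\mathrm{cof}\,\nabla w=\nabla w$ a.e., i.e.\ $\nabla w\in SO(n)$ a.e.; the classical rigidity (Liouville) theorem on the connected set $U$ then gives $\nabla w\equiv R_0\in SO(n)$. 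Moreover $|\nabla w_k|^2=\tr{(\nabla w_k^T\nabla w_k)}\to n$ in $L^1(U)$, so $\|\nabla w_k\|_{L^2(U)}\to\|R_0\|_{L^2(U)}$ and the weak convergence improves to strong convergence $\nabla w_k\to R_0$ in $L^2(U)$, whence $\inf_{R\in SO(n)}\|\nabla w_k-R\|_{L^2(U)}\to0$. What remains — passing to a further blow-up at the scale $a_k:=\inf_{R}\|\nabla w_k-R\|_{L^2(U)}$, where near the limiting rotation the estimate linearizes through the pointwise inequality $\dist(Id+\eta,SO(n))\geq|\sym\,\eta|-C|\eta|^2$ together with Korn's second inequality $\inf_{A^T=-A}\|\nabla z-A\|_{L^2(U)}\leq C(U)\|\sym\,\nabla z\|_{L^2(U)}$ — is the delicate point: the obstruction is the region where $\nabla w_k$ lies near a \emph{different} rotation, whose contribution to the rescaled strain need not be negligible, and controlling it is where the multi-scale covering analysis of \cite{FJM} is needed. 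This, I expect, is the main difficulty; combined with the truncation step it proves the estimate on a fixed cube or ball with a dimensional constant.

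Finally, for a general bounded Lipschitz domain $U$, I would cover $\overline U$ by finitely many balls $B_1,\dots,B_N$ chosen so that each $B_i\cap U$ is a connected Lipschitz set and the graph having an edge between $i$ and $j$ whenever $|B_i\cap B_j\cap U|>0$ is connected. Applying the cube/ball case on each $B_i\cap U$ produces rotations $R_i$ with $\|\nabla v-R_i\|_{L^2(B_i\cap U)}\leq C\,d$; on an overlap of positive measure both $\|\nabla v-R_i\|$ and $\|\nabla v-R_j\|$ are controlled by $d$, so $|R_i-R_j|\leq C(U)\,d$, and chaining along the connected overlap graph shows every $R_i$ lies within $C(U)\,d$ of $R_1$. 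Summing the local bounds over the pieces $B_i\cap U$ then yields $\|\nabla v-R_1\|_{L^2(U)}\leq C(U)\,d$, which is the claim; connectedness of $U$ enters both in the Liouville step and in the chaining, and the dependence of the constant on the number, radii and overlaps of the balls is exactly the $C(U)$ of the statement.
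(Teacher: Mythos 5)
The paper does not prove this statement at all: it is quoted verbatim as \cite[Theorem 3.1]{FJM}, the geometric rigidity theorem of Friesecke, James and M\"uller, and is used as a black box. So the relevant question is whether your sketch would actually constitute a proof, and it does not: you have correctly reproduced the architecture of the FJM argument (Lipschitz truncation to reduce to bounded gradients, weak continuity of minors plus the Liouville theorem to identify the limit, linearization via Korn's second inequality, and a chaining/covering argument to pass from cubes to a general Lipschitz domain), but the step you yourself flag as ``the delicate point'' is precisely where the entire content of the theorem lies, and it is left unproved.

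Concretely: the compactness argument only yields the qualitative conclusion $\inf_{R\in SO(n)}\|\nabla w_k-R\|_{L^2(U)}\to 0$; it cannot produce the linear bound $\|\nabla w-R\|_{L^2}\leq C\,\|\dist(\nabla w,SO(n))\|_{L^2}$, because after rescaling by $a_k$ the sequence $(\nabla w_k-R_k)/a_k$ need not be compact, and the set where $\nabla w_k$ sits near a rotation far from $R_k$ contributes a term to the rescaled strain that a contradiction argument cannot dismiss. This is exactly why the proof in \cite{FJM} is not a compactness proof: it runs through a constructive, multi-scale scheme (local rigidity on small cubes via an interior estimate for the linearized problem, followed by a weighted covering and iteration that doubles the scale while controlling the accumulation of errors). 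Without that scheme, or some substitute for it, the estimate on a single cube --- the input your final chaining step requires --- is not established, so the argument is incomplete at its core. For the purposes of this paper the correct move is the one the author makes: cite the result. If you do want a self-contained proof, the quantitative single-cube estimate must be supplied in full; everything else in your sketch is standard bookkeeping around it.
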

\begin{oss}
\label{ofjm}
The constant $C(U)$ in Theorem \ref{fjm} is invariant by translations and dilations of $U$ and is uniform for families of sets which are uniform bi-Lipschitz images of a cube.
\end{oss}
The rigidity estimate provided in Theorem \ref{fjm} allows us to approximate sequences of deformations whose distance of the gradient from $SO(3)$ is uniformly bounded, by means of rotations. More precisely, the following theorem holds true.
\begin{teo}
\label{compactbd1}
Assume that $\alpha\geq 3$. Let $(y^{\ep})$ be a sequence of deformations in $W^{1,2}(\Omega;\R^3)$ satisfying \eqref{bddatum} and such that
\begin{equation}
\label{elasticbd}
\|\dist(\nep y^{\ep}, SO(3))\|_{L^2(\Omega; \mthree)}\leq C\ep^{\alpha-1}.
\end{equation}
Then, there exists a sequence $(R^{\ep})\subset W^{1,\infty}(\omega; \mthree)$ such that for every $\ep>0$
\begin{eqnarray}
&&\label{rt1} R^{\ep}(x')\in SO(3)\quad\text{for every }x'\in \omega,\\
&&\label{rt2} \|\nep y^{\ep}-R^{\ep}\|_{L^2(\Omega;\mthree)}\leq C\ep^{\alpha-1},\\
&&\label{rt3} \|\partial_i R^{\ep}\|_{L^2(\omega;\mthree)}\leq C\ep^{\alpha-2},\,i=1,2\\
&&\label{rt4} \|R^{\ep}-Id\|_{L^2(\omega;\mthree)}\leq C\ep^{\alpha-2}.
\end{eqnarray} 
\end{teo}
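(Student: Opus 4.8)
The plan is the classical one for results of this type: pass to a domain whose thickness equals its in-plane mesh, apply the rigidity estimate (Theorem~\ref{fjm}) on a grid of $\ep$-cubes, and glue the resulting piecewise-constant rotations into a single Sobolev rotation field by mollification; the identity $\eqref{rt4}$ is then extracted from the boundary condition $\eqref{bddatum}$.

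\emph{Step 1 (rescaling and cube decomposition).} To $y^{\ep}$ I associate $\varphi^{\ep}\in W^{1,2}(\Oe;\R^3)$, $\varphi^{\ep}(x):=y^{\ep}(x',x_3/\ep)$, so that $\nabla\varphi^{\ep}(x)=(\nep y^{\ep})(x',x_3/\ep)$ and, by $\eqref{elasticbd}$, $\|\dist(\nabla\varphi^{\ep},SO(3))\|_{L^2(\Oe)}^2=\ep\,\|\dist(\nep y^{\ep},SO(3))\|_{L^2(\Omega)}^2\le C\ep^{2\alpha-1}$. I cover $\omega$ by essentially disjoint squares $(S_a)$ of side $\ep$ with uniformly bounded overlap, replacing the boundary squares $S_a\times(-\tfrac\ep2,\tfrac\ep2)$ by sets which are bi-Lipschitz images of a fixed cube with $\ep$-independent constants (via the Lipschitz atlas of $\partial\omega$); call the resulting family of $\ep$-cubes $(Q_a)$. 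On each $Q_a$, Theorem~\ref{fjm} together with the uniformity in Remark~\ref{ofjm} gives $R_a\in SO(3)$ with $\|\nabla\varphi^{\ep}-R_a\|_{L^2(Q_a)}\le C\,d_a$, where $d_a:=\|\dist(\nabla\varphi^{\ep},SO(3))\|_{L^2(Q_a)}$ and $C$ does not depend on $a,\ep$.

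\emph{Step 2 (gluing).} For adjacent cubes I apply Theorem~\ref{fjm} on $Q_a\cup Q_b$ (again a uniform bi-Lipschitz image of a cube) and compare with $R_a,R_b$, obtaining $|R_a-R_b|\le C\ep^{-3/2}d_{ab}$ with $d_{ab}:=\|\dist(\nabla\varphi^{\ep},SO(3))\|_{L^2(Q_a\cup Q_b)}$; in particular $\max_{a\sim b}|R_a-R_b|\le C\ep^{-3/2}\|\dist(\nabla\varphi^{\ep},SO(3))\|_{L^2(\Oe)}\le C\ep^{\alpha-2}\to0$, and $\sum_{a\sim b}|R_a-R_b|^2\le C\ep^{-3}\|\dist(\nabla\varphi^{\ep},SO(3))\|_{L^2(\Oe)}^2\le C\ep^{2\alpha-4}$. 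Letting $\bar R^{\ep}$ be piecewise constant, $\bar R^{\ep}\equiv R_a$ on $S_a$, $\eta_{\ep}$ a mollifier at scale $\ep$, and $R^{\ep}:=\mathrm{proj}_{SO(3)}(\bar R^{\ep}*\eta_{\ep})$: since $\bar R^{\ep}*\eta_{\ep}$ stays within $C\ep^{\alpha-2}$ of $SO(3)$, $R^{\ep}$ is well defined, lies in $W^{1,\infty}(\omega;\mthree)$ with values in $SO(3)$ (this is $\eqref{rt1}$), and summing the local bounds gives $\|\bar R^{\ep}*\eta_{\ep}-\bar R^{\ep}\|_{L^2(\omega)}\le C\ep^{\alpha-1}$, $\|\nabla'(\bar R^{\ep}*\eta_{\ep})\|_{L^2(\omega)}\le C\ep^{\alpha-2}$; combining these with $\|\nabla\varphi^{\ep}-\bar R^{\ep}\|_{L^2(\Oe)}\le C\ep^{\alpha-\frac12}$ and undoing the rescaling of Step~1 yields $\eqref{rt2}$ and $\eqref{rt3}$.

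\emph{Step 3 (closeness to the identity).} By $\eqref{rt3}$ and the Poincaré inequality on the connected set $\omega$, $R^{\ep}$ is $C\ep^{\alpha-2}$-close in $L^2(\omega)$ to a constant matrix, whose projection $\bar R_{\ep}\in SO(3)$ satisfies $\|R^{\ep}-\bar R_{\ep}\|_{L^2(\omega)}\le C\ep^{\alpha-2}$. There remains to prove $|\bar R_{\ep}-Id|\le C\ep^{\alpha-2}$, and here the boundary condition $\eqref{bddatum}$ enters. Setting $w^{\ep}:=\bar R_{\ep}^{T}y^{\ep}-\psi^{\ep}$, one gets from $\eqref{rt2}$ that $\|\nep w^{\ep}\|_{L^2(\Omega)}\le C\ep^{\alpha-2}$, hence $\|\nabla' w^{\ep}\|_{L^2(\Omega)}\le C\ep^{\alpha-2}$ and $\|\partial_3 w^{\ep}\|_{L^2(\Omega)}\le C\ep^{\alpha-1}$, while on $\Gamma_d$, by $\eqref{bddatum}$, $w^{\ep}=(\bar R_{\ep}^{T}-Id)\psi^{\ep}+\bar R_{\ep}^{T}(\phi^{\ep}\circ\psi^{\ep}-\psi^{\ep})$, with $\phi^{\ep}\circ\psi^{\ep}-\psi^{\ep}$ of size $O(\ep^{\alpha-2})$ in $W^{1,\infty}(\Gamma_d)$ and $x_3$-derivative of size $O(\ep^{\alpha-1})$. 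Comparing the zeroth and first $x_3$-moments of $w^{\ep}$ (which are $W^{1,2}(\omega)$ functions with gradients bounded by $C\ep^{\alpha-2}$) with the explicit, $O(\ep^{\alpha-1})$-small-modulo-constants moments of $\phi^{\ep}\circ\psi^{\ep}-\psi^{\ep}$ on $\Gamma_d$, and propagating into $\omega$ by a Poincaré inequality with mean over $\gamma_d$ (admissible since $\mathcal H^1(\gamma_d)>0$), forces $(\bar R_{\ep}-Id)e_i$, $i=1,2$, to be $O(\ep^{\alpha-2})$; since $\bar R_{\ep}\in SO(3)$ this pins the third column as well, giving $\eqref{rt4}$.

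\textbf{The main obstacle} is Step~3: obtaining the sharp rate $\ep^{\alpha-2}$ for $|\bar R_{\ep}-Id|$ and, more delicately, transferring the interior bound $\eqref{rt2}$ into quantitative information along $\gamma_d$ — the tangential derivatives of the trace of $y^{\ep}$ on $\Gamma_d$ are not controlled a priori by $L^2(\Omega)$ norms of $\nep y^{\ep}$, so one must exploit that $\eqref{rt2}$ is \emph{small} and not merely bounded in order to exclude concentration in a boundary layer. By contrast, the rigidity-and-mollification part (Steps~1--2) is routine, the only point requiring care being the uniformity of the constant in Theorem~\ref{fjm} over the family of $\ep$-cubes, including those built from the Lipschitz atlas of $\partial\omega$ (Remark~\ref{ofjm}).
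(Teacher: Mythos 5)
Your Steps 1--2 reproduce the standard rigidity-and-mollification construction; the paper simply delegates \eqref{rt1}--\eqref{rt3} to \cite[Theorem 6 and Remark 5]{FJM2}, and your account of that part is fine. For \eqref{rt4} your architecture (boundary condition plus $x_3$-moments, as in Lecumberry--M\"uller \cite{LM2}) is also the paper's, and your worry about boundary traces is a non-issue: the moments are $W^{1,2}(\omega)$ functions, so their traces on $\gamma_d$ are controlled by the trace theorem. The real difficulties in Step 3 are two quantitative points that your sketch gets wrong.

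First, the third column. With the bounds you state, the first moment $\mu^{\ep}(x'):=\int_{-1/2}^{1/2}x_3\,w^{\ep}\,dx_3$ satisfies only $\|\mu^{\ep}\|_{W^{1,2}(\omega)}\leq C\ep^{\alpha-2}$, hence $\|\mu^{\ep}\|_{L^2(\gamma_d)}\leq C\ep^{\alpha-2}$; since on $\gamma_d$ one has $\mu^{\ep}=\tfrac{\ep}{12}(\bar R_{\ep}^{T}-Id)e_3+O(\ep^{\alpha-1})$, this yields only $|(\bar R_{\ep}-Id)e_3|\leq C\ep^{\alpha-3}$, one order short (and your ``Poincar\'e with mean over $\gamma_d$'' gives nothing here, because $\mu^{\ep}|_{\gamma_d}$ is already constant up to $O(\ep^{\alpha-1})$). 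The sharp rate requires the refined bound $\|\mu^{\ep}\|_{W^{1,2}(\omega)}\leq C\ep^{\alpha-1}$, which holds because the first moment annihilates the $x_3$-independent field $R^{\ep}(x')-\bar R_{\ep}$, so $\nabla'\mu^{\ep}$ sees only the $O(\ep^{\alpha-1})$ deviation $\nabla' y^{\ep}-R^{\ep}(e_1|e_2)$ of \eqref{rt2}; this is exactly what the paper imports from \cite[Lemma 1 and Corollary 1]{FJM2} for the normalized sequence $\tilde\xi^{\ep}$. Second, the in-plane block. From the zeroth moments you obtain that the affine map $x'\mapsto(\bar R_{\ep}^{T}-Id)(x',0)^T$ is within $C\ep^{\alpha-2}$ of a constant in $L^2(\gamma_d)$, and you conclude that the first two columns are $O(\ep^{\alpha-2})$. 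This deduction fails whenever $\gamma_d$ is contained in a line (e.g.\ one side of a square $\omega$), a perfectly admissible case: a nonzero affine map can be constant on such a $\gamma_d$. The paper's remedy is to first pin the third column via the first moment, conclude that the $2\times2$ minor is within $C\ep^{\alpha-2}$ of some $\hat Q^{\ep}\in SO(2)$, and then use the pointwise identity $2|(\hat Q^{\ep}-Id)x'|^2=|\hat Q^{\ep}-Id|^2|x'|^2$ together with the normalization $\int_{\gamma_d}x'\,d\mathcal{H}^1=0$; this extracts $|\hat Q^{\ep}-Id|\leq C\ep^{\alpha-2}$ from $\|(\hat Q^{\ep}-Id)x'\|_{L^2(\gamma_d)}$ for \emph{any} $\gamma_d$ of positive length. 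Note also that the order matters: controlling only the $2\times2$ minor to order $\delta$ controls the third column of an $SO(3)$ matrix merely to order $\sqrt{\delta}$, so you cannot postpone the third column to the end as your last sentence suggests.
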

\begin{proof}
Arguing as in \cite[Theorem 6 and Remark 5]{FJM2} we can construct a sequence of maps $R^{\ep}\in W^{1,\infty}(\omega;\mthree)$ satisfying \eqref{rt1}--\eqref{rt3}. To complete the proof of the theorem it remains only to prove \eqref{rt4}.

To this aim, we preliminarily recall that there exists a neighbourhood $U$ of $SO(3)$ where the projection $\Pi:U\to SO(3)$ onto $SO(3)$ is well defined. By Poincar\'e inequality, \eqref{rt3} yields 
\begin{equation}
\label{averrot}
\Big\|R^{\ep}-\fint_{\omega}{R^{\ep}\,dx'}\Big\|_{L^2(\omega;\mthree)}\leq C \ep^{\alpha-2}.
\end{equation}
On the other hand, by \eqref{rt1} we have
$$\dist^2\Big(\fint_{\omega}{R^{\ep}\,dx'},SO(3)\Big)\cal{L}^2(\omega)\leq \Big\|R^{\ep}-\fint_{\omega}{R^{\ep}\,dx'}\Big\|^2_{L^2(\omega;\mthree)}.$$
 Hence, by \eqref{averrot} for $\ep$ small enough we can define $\hat{R}^{\ep}:=\Pi (\fint_{\omega}{R^{\ep}\,dx'}),$ which fulfills
$$\Big|\hat{R}^{\ep}-\fint_{\omega}{R^{\ep}\,dx'}\Big|\leq C\Big\|R^{\ep}-\fint_{\omega}{R^{\ep}\,dx'}\Big\|_{L^2(\omega;\mthree)}\leq C\ep^{\alpha-2}.$$

\begin{equation}
\nonumber
\|\hat{R}^{\ep}-R^{\ep}\|_{L^2(\omega;\mthree)}\leq\Big\|\hat{R}^{\ep}-\fint_{\omega}{R^{\ep}\,dx'}\Big\|_{L^2(\omega;\mthree)}+\Big\|\fint_{\omega}{R^{\ep}\,dx'}-R^{\ep}\Big\|_{L^2(\omega;\mthree)} \leq C\ep^{\alpha-2}.
\end{equation}
To prove \eqref{rt4} it is now enough to show that
\begin{equation}
\label{claimrot}
|\hat{R}^{\ep}-Id|\leq C\ep^{\alpha-2}.
\end{equation}
To this purpose, we argue as in \cite[Section 4.2, Lemma 13]{LM2}. We consider the sequences
\begin{eqnarray*}
\label{rtep}&&\tilde{R}^{\ep}:=(\hat{R}^{\ep})^T R^{\ep},\\
\label{ytep}&&\tilde{y}^{\ep}:=(\hat{R}^{\ep})^T y^{\ep}-c^{\ep},\\
\label{utep}&&\tilde{u}^{\ep}(x'):=\frac{1}{\ep^{\alpha-1}}\intt{\big(\m{\tilde{y}^{\ep}}(x',x_3)-x'\big)}\quad\text{for a.e. }x'\in\omega,\\
\label{vtep}&&\tilde{v}^{\ep}(x'):=\frac{1}{\ep^{\alpha-2}}\intt{\tilde{y}^{\ep}_3(x',x_3)}\quad\text{for a.e. }x'\in\omega,\\
\label{xitep}&&\tilde{\xi}^{\ep}(x'):=\frac{1}{\ep^{\alpha-1}}\intt{x_3\Big(\tilde{y}^{\ep}(x',x_3)-\Big(\begin{array}{c}x'\\\ep x_3\end{array}\Big)\Big)}\quad\text{for a.e. }x'\in\omega,
\end{eqnarray*}
where the constants $c^{\ep}$ are chosen in such a way that $$\intom{\big(\tilde{y}^{\ep}(x)-x\big)}=0.$$
By \cite[Lemma 1 and Corollary 1]{FJM2}, there exist $\tilde{u}\in W^{1,2}(\omega;\R^2)$, $\tilde{v}\in W^{2,2}(\omega)$ and $\tilde{\xi}\in W^{1,2}(\omega;\R^3)$ such that
\begin{eqnarray}
\label{fcvutep}&& \tilde{u}^{\ep}\deb \tilde{u}\quad\text{weakly in }W^{1,2}(\omega;\R^2),\\
\label{fcvvtep}&& \tilde{v}^{\ep}\to \tilde{v}\quad\text{strongly in }W^{1,2}(\omega),\\
\label{fcvxitep}&& \tilde{\xi}^{\ep}\deb \tilde{\xi}\quad\text{weakly in }W^{1,2}(\omega;\R^3).
\end{eqnarray}
We now write $u^{\ep}, v^{\ep}$ and $\xi^{\ep}$ in terms of $\tilde{u}^{\ep}, \tilde{v}^{\ep}$ and $\tilde{\xi}^{\ep}$. We have
\begin{equation}
\label{notiltil}
\Big(\begin{array}{c}\ep^{\alpha-1}u^{\ep}(x')\\\ep^{\alpha-2}v^{\ep}(x')\end{array}\Big)=(\hat{R}^{\ep}-Id)\Big(\begin{array}{c}x'\\0\end{array}\Big)+\hat{R}^{\ep}\Big(\begin{array}{c}\ep^{\alpha-1}\tilde{u}^{\ep}(x')\\\ep^{\alpha-2}\tilde{v}^{\ep}(x')\end{array}\Big)+\hat{R}^{\ep}c^{\ep},
\end{equation}
for a.e. $x'\in\omega$ and
\begin{equation}
\label{1stmomrel}
\xi^{\ep}(x')=\frac{1}{12\ep^{\alpha-2}}(\hat{R}^{\ep}-Id)e_3+\hat{R}^{\ep}\tilde{\xi}^{\ep}(x')\quad\text{for a.e. }x'\in\omega.
\end{equation}
By \eqref{fcvxitep} there exists a constant $C$ such that $\|\tilde{\xi}^{\ep}\|_{L^2(\gamma_d;\R^3)}\leq C$ for every $\ep$. Moreover, by \eqref{defbddat} and \eqref{bddatum} there holds
$$\xi^{\ep}(x')=\frac{1}{\ep^{\alpha-1}}\intt{x_3\Big(\pep(x',\ep x_3)-\Big(\begin{array}{c}x'\\\ep x_3\end{array}\Big)\Big)}=\Big(\begin{array}{c}-\frac{1}{12}\nabla' v^0(x')\\0\end{array}\Big)\quad\cal{H}^1\text{- a.e. on }\gamma_d,$$
hence $(\xi^{\ep})$ is uniformly bounded in $L^2(\gamma_d;\R^3)$. Therefore, by \eqref{1stmomrel} we deduce 
\begin{equation}
\label{3colrotminid}
|(\hat{R}^{\ep}-Id)e_3|\leq C \ep^{\alpha-2}\|\xi^{\ep}-\hat{R}^{\ep}\tilde{\xi}^{\ep}\|_{L^2(\Gamma_d;\R^3)}\leq C\ep^{\alpha-2},
\end{equation}
for every $\ep$. Since $\hat{R}^{\ep}\in SO(3)$, \eqref{3colrotminid} implies that
\begin{equation}
\label{3rowrotminid}
|(\hat{R}^{\ep}-Id)^Te_3|\leq C\ep^{\alpha-2}
\end{equation}
for every $\ep$ and there exists a sequence $(\hat{Q}^{\ep})\subset SO(2)$ such that
\begin{equation}
\label{minorrotminid}
|\m{\hat{R}^{\ep}}-\hat{Q}^{\ep}|\leq C\ep^{\alpha-2}.
\end{equation}
Now, without loss of generality we can assume that
\begin{equation}
\label{normalization}
\int_{\gamma_d}{x'\,d\cal{H}^1(x')}=0\quad\text{and }\int_{\gamma_d}{|x'|^2\,d\cal{H}^1(x')}=c>0.
\end{equation}
By \eqref{fcvutep} and \eqref{fcvvtep} we have $\|\tilde{u}^{\ep}\|_{L^2(\gamma_d;\R^2)}+\|\tilde{v}^{\ep}\|_{L^2(\gamma_d)}\leq C$ for every $\ep$. On the other hand \eqref{defbddat} and \eqref{bddatum} imply that
$$u^{\ep}(x')=u^0(x')\quad\text{and}\quad v^{\ep}(x')=v^0(x')\quad\cal{H}^1\text{- a.e. on } \gamma_d,$$ 
hence both $(u^{\ep})$ and $(v^{\ep})$ are uniformly bounded in $L^2(\gamma_d;\R^2)$ and $L^2(\gamma_d)$, respectively. Therefore, by \eqref{notiltil} and \eqref{minorrotminid} we deduce 
\begin{equation}
\label{prelestrbar}
|(\hat{Q}^{\ep}-Id)x'+\m{\hat{R}^{\ep}c^{\ep}}|\leq C\ep^{\alpha-2}.
\end{equation}
 The two terms in the left hand side of \eqref{prelestrbar} are orthogonal in the sense of $L^2(\gamma_d;\R^2)$ by \eqref{normalization}, hence
\eqref{prelestrbar} implies that
$$\|(\hat{Q}^{\ep}-Id)x'\|^2_{L^2(\gamma_d;\R^2)}\leq C\ep^{2(\alpha-2)}.$$
Since $\hat{Q}^{\ep}\in SO(2)$, it satisfies
$$2|(\hat{Q}^{\ep}-Id)x'|^2=|\hat{Q}^{\ep}-Id|^2|x'|^2\quad\text{for every }x'\in\gamma_d.$$
Therefore, applying again \eqref{normalization} we obtain
\begin{equation}
\label{qhatest}
c|\hat{Q}^{\ep}-Id|^2=2\int_{\gamma_d}{|\hat{Q}^{\ep}-Id|^2 |x'|^2\,d\cal{H}^1(x')}\leq C\ep^{2(\alpha-2)}.
\end{equation}
Claim \eqref{claimrot} follows now by collecting \eqref{3colrotminid}--\eqref{minorrotminid} and \eqref{qhatest}.
\end{proof}

In the remaining of this section we shall establish some compactness results for the displacements defined in \eqref{inplane} and \eqref{outofplane}, and we shall prove a liminf inequality both for the energy functional and the dissipation potential. We first introduce the limit functional.

Let $\A:\mathbb{M}^{2\times 2}\to \mthree_{\sym}$ be the operator given by
$$\A (F):=\Bigg(\begin{array}{cc}\sym\,F&\hspace{-0.5 cm}\begin{array}{c}\lambda_1(F)\vspace{-0.1 cm}\\\lambda_2(F)\end{array}\vspace{-0.1 cm}\\\begin{array}{cc}\lambda_1(F)&\lambda_2(F)\end{array}&\hspace{-0.5 cm}\lambda_3(F)\end{array}\Bigg)\quad\text{for every }F\in\M^{2\times 2},$$
 where for every $F\in\M^{2\times 2}$ the triple $(\lambda_1(F),\lambda_2(F),\lambda_3(F))$ is the unique solution to the minimum problem
$$\min_{\lambda_i\in\R}Q\Bigg(\begin{array}{cc}\sym\,F&\hspace{-0.5 cm}\begin{array}{c}\lambda_1\vspace{-0.1 cm}\\\lambda_2\end{array}\vspace{-0.1 cm}\\\begin{array}{cc}\lambda_1&\lambda_2\end{array}&\hspace{-0.5 cm}\lambda_3\end{array}\Bigg).$$
We remark that for every $F\in\M^{2\times 2}$, $\A(F)$ is given by the unique solution to the linear equation
\begin{equation}
 \label{linearmin}
\C \A (F):\Bigg(\begin{array}{ccc}0&0&\lambda_1\\0&0&\lambda_2\\\lambda_1&\lambda_2&\lambda_3\end{array}\Bigg)=0\quad\text{for every }\lambda_1,\lambda_2,\lambda_3\in\R.
\end{equation}
This implies, in particular, that $\A$ is linear.

We define the quadratic form $Q_2:\mathbb{M}^{2\times 2}\to [0,+\infty)$ as
$$Q_2(F)=Q(\A (F))\quad\text{for every }F\in \mathbb{M}^{2\times 2}.$$
By properties of $Q$, we have that $Q_2$ is positive definite on symmetric matrices. We also define the tensor $\C_2:\mathbb{M}^{2\times 2}\to \mthree_{\sym}$, given by

\begin{equation}
\label{defc2}
\C_2 F:=\C\A (F)\quad\text{for every }F\in \mathbb{M}^{2\times 2}.
\end{equation}
We remark that by \eqref{linearmin} there holds
\begin{equation}
\label{nothird}
\C_2 F:G=\C_2 F:\Big(\begin{array}{cc}\sym\,G&0\\0&0\end{array}\Big)\quad\text{for every }F\in \mathbb{M}^{2\times 2},\,G\in\mthree
\end{equation}
and
$$Q_2(F)=\frac{1}{2}\C_2 F:\Big(\begin{array}{cc}\sym\,F&0\\0&0\end{array}\Big)\quad\text{for every }F\in \mathbb{M}^{2\times 2}.$$

Denoting by
$\cal{A}(u^0,v^0)$ the set of triples $(u,v,p)\in W^{1,2}(\Omega;\R^2)\times W^{2,2}(\Omega)\times L^2(\Omega;\md)$ such that 
$$u(x')=u^0(x'),\quad v(x')=v^0(x'),\quad\text{and } \nabla v(x')=\nabla v^0(x')\quad\cal{H}^1\text{ - a.e. on }\gamma_d,$$
we introduce the functionals $\cal{J}_{\alpha}:\cal{A}(u^0,v^0)\to [0,+\infty)$, given by
\begin{eqnarray}
\label{numlimfa}\cal{J}_{\alpha}(u,v,p):=\intom{Q_2(\sym\nabla' u-x_3(\nabla')^2 v-p')}+\intom{\B{p}}+\intom{H_D(p-p^0)}
\end{eqnarray}
for $\alpha>3$, and
\begin{eqnarray}
\nonumber\cal{J}_3(u,v,p)&:=&\intom{Q_2\big(\sym\nabla' u+\tfrac{1}{2}\nabla' v\otimes\nabla' v-x_3(\nabla')^2 v-p'\big)}+\intom{\B{p}}\\
\label{numlimf3}&+&\intom{H_D(p-p^0)},
\end{eqnarray}
for every $(u,v,p)\in \cal{A}(u^0,v^0)$. In the expressions of the functionals, $p^0$ is a given map in $L^2(\Omega;\md)$ that represents the history of the plastic deformations. 

Finally, for every sequence $(y^{\ep})$ in $W^{1,2}(\Omega;\R^3)$ satisfying both \eqref{bddatum} and \eqref{elasticbd}, we introduce the strains
\begin{equation}
\label{defgep}
G^{\ep}(x):=\frac{(R^{\ep}(x))^T\nep y^{\ep}(x)-Id}{\ep^{\alpha-1}}\quad\text{for a.e. }x\in\Omega,
\end{equation}
where the maps $R^{\ep}$ are the pointwise rotations provided by Theorem \ref{compactbd1}.

We are now in a position to state the main result of this section.
\begin{teo}
\label{liminfineq}
Assume that $\alpha\geq 3$. Let $(y^{\ep},P^{\ep})$ be a sequence of pairs in $\cal{A}_{\ep}(\phi^{\ep})$ satisfying 
\begin{equation}
\label{engest2}
\cal{I}(y^{\ep},P^{\ep})\leq C{\ep^{2\alpha-2}}
\end{equation}
for every $\ep>0$.
 Let $u^{\ep}$, $v^{\ep}$ and $G^{\ep}$ be defined as in \eqref{inplane}, \eqref{outofplane} and \eqref{defgep}, respectively. Then, there exists $(u,v,p)\in \cal{A}(u^0,v^0)$ such that, up to subsequences, there hold
\begin{eqnarray}
&&\label{cptyep}y^{\ep}\to \Big(\begin{array}{c}x'\\0\end{array}\Big)\quad\text{strongly in }W^{1,2}(\Omega;\R^3),\\
&&\label{cptuep} u^{\ep}\deb u\quad\text{weakly in }W^{1,2}(\omega;\R^2),\\
&&\label{cptvep} v^{\ep}\to v\quad\text{strongly in }W^{1,2}(\omega),\\
&&\label{cptnep3}\frac{\nabla' y^{\ep}_3}{\ep^{\alpha-2}}\to \nabla' v\quad\text{strongly in }L^2(\Omega;\R^2),
\end{eqnarray}
and the following estimate holds true
\begin{equation}
\label{3comphest}
\big\|\frac{y^{\ep}_3}{\ep}-x_3-\ep^{\alpha-3}v^{\ep}\big\|_{L^2(\Omega)}\leq C\ep^{\alpha-2}.
\end{equation}
Moreover, there exists $G\in L^2(\Omega;\mthree)$ such that
\begin{equation}
\label{cptGep}
G^{\ep}\deb G\quad\text{weakly in }L^2(\Omega;\mthree),
\end{equation}
and the $2\times 2$ submatrix $G'$ satisfies
\begin{equation}
\label{gaff}
G'(x', x_3) = G_0(x') - x_3 (\nabla')^2 v(x')\quad\text{for a.e. }x\in\Omega,
\end{equation}
where
\begin{eqnarray}
&&\label{Ga3} \sym\, G_0 = \frac{(\nabla' u+(\nabla' u)^T +\nabla' v\otimes \nabla' v)}{2}\quad\text{if } \alpha=3,\\
&&\label{Ga>3} \sym\, G_0 = \sym \nabla' u\quad\text{if }\alpha> 3.
\end{eqnarray}
The sequence of plastic strains $(P^{\ep})$ fulfills 
\begin{equation}
\label{Pep1} P^{\ep}(x)\in K\quad\text{for a.e. }x\in\Omega,
\end{equation}
and
\begin{equation}
\label{Pep2} \|P^{\ep}-Id\|_{L^2(\Omega;\mthree)}\leq C\ep^{\alpha-1}
\end{equation}
for every $\ep$. Moreover, setting 
\begin{equation}
\label{defpep}
p^{\ep}:=\frac{P^{\ep}-Id}{\ep^{\alpha-1}},
\end{equation} up to subsequences
\begin{equation}
\label{wconvpep}
p^{\ep}\deb p\quad\text{weakly in }L^2(\Omega;\mthree).
\end{equation}
Finally,
\begin{eqnarray}
\label{liminftot}
\intom{Q_2(\sym\, G'-p')}+\intom{B(p)}
\leq \liminf_{\ep\to 0} \frac{1}{\ep^{2\alpha-2}}\cal{I}(y^{\ep},P^{\ep}).\end{eqnarray}
If in addition 
\begin{equation}
\label{engest3}
\frac{1}{\ep^{\alpha-1}}\intom{D({P}^{\ep,0},{P}^{\ep})}\leq C\quad\text{for every }\ep>0
\end{equation}
and there exist a map $p^0\in L^2(\Omega;\mthree_D)$ and a sequence $({p}^{\ep,0})\subset L^2(\Omega;\mthree)$ such that $P^{\ep,0}=Id+\ep^{\alpha-1}p^{\ep,0}$, with ${p}^{\ep,0}\deb p^0$ weakly in $L^2(\Omega;\mthree)$, then
\begin{equation}
\label{liminfdiss}
\intom{H_{D}({p}-p^0)}\leq \liminf_{\ep\to 0} \frac{1}{\ep^{\alpha-1}}\intom{D({P}^{\ep,0},{P}^{\ep})}.
\end{equation}
\end{teo}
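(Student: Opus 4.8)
The plan is to follow the strategy developed in \cite{FJM2} and \cite{MS}, exploiting crucially the hardening term to reduce everything to the rigidity-based arguments of Theorem \ref{compactbd1}.

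\textbf{Step 1: coercivity of the plastic strains.} From \eqref{engest2} and the nonnegativity of $W_{el}$ we get $\intom{W_{hard}(P^{\ep})}\leq C\ep^{2\alpha-2}$, so that $W_{hard}(P^{\ep})<+\infty$ a.e., hence $P^{\ep}\in K$ a.e.\ by \eqref{prh1}, which is \eqref{Pep1}. Then $P^{\ep}\in SL(3)$ a.e.; writing $P^{\ep}=Id+\ep^{\alpha-1}p^{\ep}$ as in \eqref{defpep}, the lower bound \eqref{grbelowh} (equivalently \eqref{prh3}) gives $\frac{c_3}{2}\|P^{\ep}-Id\|_{L^2}^2\leq \intom{\widetilde{W}_{hard}(P^{\ep})}\leq C\ep^{2\alpha-2}$, which is \eqref{Pep2}; consequently $(p^{\ep})$ is bounded in $L^2(\Omega;\mthree)$ and \eqref{wconvpep} follows along a subsequence. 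Moreover, by \eqref{prk1}, $\|(P^{\ep})^{-1}\|_{L^\infty}\leq c_k$, so from $\nep y^{\ep}=(\nep y^{\ep}(P^{\ep})^{-1})P^{\ep}$ together with (H4) we obtain
$$\|\dist(\nep y^{\ep},SO(3))\|_{L^2}\leq \|\dist(\nep y^{\ep}(P^{\ep})^{-1},SO(3))\|_{L^2}+\|\nep y^{\ep}(P^{\ep})^{-1}(P^{\ep}-Id)\|_{L^2}\leq C\ep^{\alpha-1},$$
using $W_{el}\geq c_1\dist^2(\cdot,SO(3))$, \eqref{engest2}, \eqref{Pep2}, and the $L^\infty$ bound on $P^{\ep}$ and $(P^{\ep})^{-1}$ (the product $(P^\ep)^{-1}(P^\ep-Id)$ being controlled in $L^2$ and $\nep y^\ep(P^\ep)^{-1}$ close to $SO(3)$, hence bounded in $L^2$). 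Thus \eqref{elasticbd} holds and Theorem \ref{compactbd1} applies, producing the rotations $R^{\ep}$ satisfying \eqref{rt1}--\eqref{rt4}.

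\textbf{Step 2: compactness of deformations and displacements.} From \eqref{rt2} and \eqref{rt4} we get $\|\nep y^{\ep}-Id\|_{L^2}\leq C\ep^{\alpha-2}\to 0$, hence in particular $\partial_3 y^{\ep}\to 0$ and $\nabla' y^{\ep}\to (Id_{2\times 2}\,|\,0)^T$ in $L^2$; combined with the boundary datum \eqref{bddatum} (which forces $y^\ep\to(x',0)$ on $\Gamma_d$) and Poincaré, this yields \eqref{cptyep}. The convergences \eqref{cptuep}, \eqref{cptvep}, \eqref{cptnep3} and the estimate \eqref{3comphest} follow exactly as in \cite[Lemma 1, Corollary 1]{FJM2} applied to $y^\ep$ (these are statements about sequences satisfying \eqref{bddatum} and \eqref{elasticbd} only); the boundary conditions $u=u^0$, $v=v^0$, $\nabla v=\nabla v^0$ on $\gamma_d$ pass to the limit from \eqref{bddatum} by trace continuity. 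The structure \eqref{gaff}--\eqref{Ga>3} of the limit strain $G'$, with the characteristic $\tfrac12\nabla'v\otimes\nabla'v$ term appearing only when $\alpha=3$, is again obtained as in \cite[Sections 3--4]{FJM2}: one expands $R^\ep=Id+\ep^{\alpha-2}A^\ep+\dots$ with $A^\ep$ skew-symmetric converging to the matrix built from $\nabla'v$, writes $G^\ep=\frac{(R^\ep)^T\nep y^\ep-Id}{\ep^{\alpha-1}}$ and identifies the weak limit of the $2\times2$ block by Taylor-expanding and using \eqref{cptuep}--\eqref{cptnep3}, the quadratic term surviving precisely when $\alpha-2=\alpha-1-(\alpha-2)$, i.e.\ $\alpha=3$. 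That $G\deb G$ weakly in $L^2$ is immediate once $(G^\ep)$ is shown bounded in $L^2$, which follows from \eqref{rt2}.

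\textbf{Step 3: the two liminf inequalities.} For \eqref{liminftot}, introduce the rescaled elastic strain. Since $\nep y^\ep(P^\ep)^{-1}=R^\ep(Id+\ep^{\alpha-1}G^\ep)(Id+\ep^{\alpha-1}p^\ep)^{-1}$, a direct expansion using \eqref{Pep2}, \eqref{rt2} and the $L^\infty$ bounds gives
$$(R^\ep)^T\nep y^\ep(P^\ep)^{-1}=Id+\ep^{\alpha-1}\big(G^\ep-p^\ep\big)+o(\ep^{\alpha-1})\quad\text{in }L^2,$$
and, by frame indifference (H3), $W_{el}(\nep y^\ep(P^\ep)^{-1})=W_{el}\big((R^\ep)^T\nep y^\ep(P^\ep)^{-1}\big)$. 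One then splits $\Omega$ into the set where $\ep^{\alpha-1}|G^\ep-p^\ep|$ is small (using \eqref{quadrwel} to replace $W_{el}$ by the quadratic form $Q$, with error $\delta|G^\ep-p^\ep|^2$, arbitrary $\delta$) and its complement, whose measure vanishes because $G^\ep-p^\ep$ is $L^2$-bounded; dividing by $\ep^{2\alpha-2}$, using $Q(\sym\cdot)=Q(\cdot)$, adding the hardening term estimated from below via \eqref{prh4} ($\widetilde{W}_{hard}(P^\ep)\geq B(\ep^{\alpha-1}p^\ep)-\delta B(\ep^{\alpha-1}p^\ep)$ on the good set), and passing to the liminf with weak $L^2$-lower semicontinuity of the convex integrands $F\mapsto Q_2(\sym F'-\cdot)$... — more precisely, by weak lower semicontinuity of $(A,B)\mapsto \intom{Q(\sym A-\sym B)}+\intom{B(B)}$ along $(G^\ep,p^\ep)\deb(G,p)$ — one gets $\intom{Q(\sym G'-\sym p')}+\intom{B(p)}\leq\liminf\frac1{\ep^{2\alpha-2}}\cal I(y^\ep,P^\ep)$. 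Finally $Q(\sym G'-\sym p')\geq Q_2(\sym G'-p')$ by definition of $Q_2$ as the minimum over the third-row/column entries (and $p'$ being the $2\times2$ minor of $p$), yielding \eqref{liminftot}. For \eqref{liminfdiss}: on $K$, the estimate \eqref{prd2} and the triangle inequality \eqref{triang} give $D(P^{\ep,0},P^\ep)\geq D(Id,P^\ep)-D(Id,P^{\ep,0})\geq$ ... — here instead one uses the known lower bound (as in \cite{MS}) that $\frac1{\ep^{\alpha-1}}D(P^{\ep,0},P^\ep)$ is, up to $o(1)$, bounded below by $H_D$ evaluated at $\frac{P^\ep-P^{\ep,0}}{\ep^{\alpha-1}}=p^\ep-p^{\ep,0}$ (both in $K$, hence the minimizing curve in \eqref{distid} stays near $Id$ and the $1$-homogeneous $H_D$ linearizes); since $p^\ep-p^{\ep,0}\deb p-p^0$ weakly in $L^2$ and has values in $\md$ in the limit (trace-free because $\det P^\ep=\det P^{\ep,0}=1$ forces $\tr p^\ep,\tr p^{\ep,0}\to 0$ suitably; this also shows $p\in L^2(\Omega;\md)$), convexity of $H_D$ gives weak lower semicontinuity of $q\mapsto\intom{H_D(q)}$ and \eqref{liminfdiss} follows.

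\textbf{Main obstacle.} The delicate point is Step 3, specifically the lower bound for the dissipation term: one must show that $\frac1{\ep^{\alpha-1}}D(P^{\ep,0},P^\ep)\geq H_D(p^\ep-p^{\ep,0})-o(1)$ in an integrated sense, which requires a careful linearization of the geodesic distance $D$ near the identity — controlling the competitor curves $c(t)$ in \eqref{distid} and exploiting that both endpoints lie in the fixed compact set $K$ around $Id$ — together with the argument that the limit $p-p^0$ is trace-free so that $H_D$ (defined only on $\md$) is finite on it. This is precisely the core of \cite[Lemma 3.6 and surrounding estimates]{MS}, adapted here to the $\ep$-scaled thin-plate setting; the subtlety is that the various $\ep$-powers and the $L^\infty$ bounds on $P^\ep,(P^\ep)^{-1}$ must be combined so that the errors genuinely vanish after division by $\ep^{\alpha-1}$.
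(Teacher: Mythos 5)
Your proposal is correct and follows essentially the same route as the paper: coercivity of the plastic strain from the hardening bound, reduction to the rigidity-based Theorem \ref{compactbd1} and the compactness lemmas of Friesecke--James--M\"uller for the displacements and for $G^{\ep}$, a Chebyshev/Taylor-expansion argument with \eqref{quadrwel} and \eqref{prh4} plus weak lower semicontinuity and $Q\geq Q_2$ for \eqref{liminftot}, and the Mielke--Stefanelli linearization of the dissipation distance for \eqref{liminfdiss}, which the paper also only cites. The one point to tighten is your expansion $(R^\ep)^T\nep y^\ep(P^\ep)^{-1}=Id+\ep^{\alpha-1}(G^\ep-p^\ep)+o(\ep^{\alpha-1})$ ``in $L^2$'': the remainder divided by $\ep^{\alpha-1}$ only converges \emph{weakly} to zero (the paper makes this precise via the auxiliary tensor $w^\ep=\ep^{\alpha-1}(P^\ep)^{-1}(p^\ep)^2$, bounded in $L^2$ and small in $L^1$), which is exactly what the subsequent weak lower semicontinuity step requires.
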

\begin{proof}
We first remark that by \eqref{engest2} there holds
\begin{equation}
\label{hardeningbd}
\intom{W_{hard}(P^{\ep})}\leq C\ep^{2\alpha-2},
\end{equation}
which, together with \eqref{prh1}, implies \eqref{Pep1}. On the other hand, combining \eqref{prh3} and \eqref{hardeningbd} we deduce
$$c_3\|P^{\ep}-Id\|^2_{L^2(\Omega;\mthree)}\leq \intom{\tilde{W}_{hard}(P^{\ep})}\leq C\ep^{2\alpha-2},$$
which in turn yields \eqref{Pep2} and \eqref{wconvpep}.

Let $R\in SO(3)$. By \eqref{prk1}, \eqref{Pep1} and \eqref{defpep} there holds
\begin{eqnarray*}
&&|\nep y^{\ep}-R|^2=|\nep y^{\ep}-RP^{\ep}+\ep^{\alpha-1}Rp^{\ep}|^2\leq 2\big(|\nep y^{\ep}(P^{\ep})^{-1}-R|^2|P^{\ep}|^2+\ep^{2\alpha-2}|p^{\ep}|^2\big)\\
&&\leq 2\,c_K^2|\nep y^{\ep}(P^{\ep})^{-1}-R|^2+2\ep^{2\alpha-2}|p^{\ep}|^2.
\end{eqnarray*}
Hence, the growth condition (H4) implies
$$\|\dist(\nep y^{\ep},SO(3))\|^2_{L^2(\Omega;\mthree)}\leq C\Big(\intom{W_{el}(\nep y^{\ep}(P^{\ep})^{-1})}+\ep^{2\alpha-2}\|p^{\ep}\|^2_{L^2(\Omega;\mthree)}\Big),$$
which in turn yields
\begin{equation}
\nonumber
\|\dist(\nep y^{\ep},SO(3))\|^2_{L^2(\Omega;\mthree)}\leq C\ep^{2\alpha-2}
\end{equation}
by \eqref{engest2} and \eqref{wconvpep}.

Due to \eqref{bddatum}, the deformations $(y^{\ep})$ fulfill the hypotheses of Theorem \ref{compactbd1}. Hence, we can construct a sequence $(R^{\ep})$ in $W^{1,\infty}(\omega;\mthree)$ satisfying \eqref{rt1}--\eqref{rt4}. 
Properties \eqref{cptyep}--\eqref{cptnep3} and \eqref{cptGep}--\eqref{Ga>3} follow arguing as in \cite[Lemma 1, Corollary 1 and Lemma 2]{FJM2}. The only difference is due to the fact that compactness is now achieved by using the boundary condition \eqref{bddatum}, instead of performing a normalization of the deformations $y^{\ep}$. Moreover the limit in-plane and out-of-plane displacements satisfy $u=u^0$, $v=v^0$ and $\nabla'v=\nabla' v^0$ $\cal{H}^1$- a.e. on $\gamma_d$.

By Poincar\'e inequality and the definition of $v^{\ep}$, there holds
$$\Big\|\frac{y^{\ep}_3}{\ep}-x_3-\ep^{\alpha-3}v^{\ep}\Big\|_{L^2(\Omega)}\leq C\Big\|\frac{\partial_3 y^{\ep}_3}{\ep}-1\Big\|_{L^2(\Omega)},$$
hence \eqref{3comphest} is a consequence of \eqref{rt2} and \eqref{rt4}.

Inequality \eqref{liminfdiss} follows by adapting \cite[Lemmas 3.4 and 3.5]{MS}. 

The proof of \eqref{liminftot} is based on an adaptation of \cite[Proof of Lemma 3.3]{MS}: we give a sketch for convenience of the reader. 
Fix $\delta>0$, let $O_{\ep}$ be the set
$$O_{\ep}:=\{x:\,\ep^{\alpha-1}|p^{\ep}(x)|\leq c_h(\delta)\}$$
and let $\chi_{\ep}$ be its characteristic function.
By \eqref{wconvpep} and by Chebyshev's inequality there holds
$$\cal{L}^3(\Omega\setminus O_{\ep})\leq C\ep^{2\alpha-2},$$
hence by \eqref{prh4} and \eqref{engest2}, we deduce
\begin{equation}
\label{liminfhard}
 \liminf_{\ep \to 0} \frac{1}{\ep^{2\alpha-2}}\intom{W_{hard}(P^{\ep})}\geq \liminf_{\ep \to 0} (1-\delta)\intom{B(p^{\ep})\chi_{\ep} }\geq(1-\delta)\intom{\B{p}}.
\end{equation}
To prove the liminf inequality for the elastic energy, we introduce the auxiliary tensors
\begin{equation}
\label{defwep}
w^{\ep}:=\frac{(P^{\ep})^{-1}-Id+\ep^{\alpha-1}p^{\ep}}{\ep^{\alpha-1}}=\ep^{\alpha-1}(P^{\ep})^{-1}(p^{\ep})^2.
\end{equation}
 By  \eqref{prk1} and \eqref{Pep1}, there exists a constant $C$ such that 
 \begin{equation}
 \label{linftypep}
 \ep^{\alpha-1}\|p^{\ep}\|_{L^{\infty}(\Omega;\mthree)}\leq C
 \end{equation} and 
 \begin{equation}
 \label{linftyweph}
 \ep^{\alpha-1}\|w^{\ep}\|_{L^{\infty}(\Omega;\mthree)}\leq C 
 \end{equation}  
 for every $\ep$. Furthermore, by \eqref{wconvpep},
 $$\|w^{\ep}\|_{L^1(\Omega;\mthree)}\leq C\ep^{\alpha-1}\quad\text{for every }\ep.$$
 By the two previous estimates it follows that $(w^{\ep})$ is uniformly bounded in $L^2(\Omega;\mthree)$ and  
 \begin{equation}
 \label{l2weph}
 w^{\ep}\deb 0\quad\text{weakly in }L^2(\Omega;\mthree).
 \end{equation} 
For every $\ep$ we consider the map
$$F^{\ep}:=\frac{1}{\ep^{\alpha-1}}\big((Id+\ep^{\alpha-1}G^{\ep})(P^{\ep})^{-1}-Id\big).$$
By the frame-indifference hypothesis (H3) there holds
$$W_{el}(\nep y^{\ep}(P^{\ep})^{-1})=W_{el}(Id+\ep^{\alpha-1}F^{\ep}).$$
On the other hand,
$$F^{\ep}=G^{\ep}+w^{\ep}-p^{\ep}+\ep^{\alpha-1}G^{\ep}(w^{\ep}-p^{\ep}).$$
Combining \eqref{cptGep}, \eqref{wconvpep} and \eqref{linftypep}--\eqref{l2weph} we deduce
\begin{equation}
\nonumber
F^{\ep}\deb G-p\quad\text{weakly in }L^2(\Omega;\mthree).
\end{equation}
Therefore, by \eqref{quadrwel} and arguing as in the proof of \eqref{liminfhard} we conclude that
\begin{eqnarray}
\nonumber
\intom{Q_2(\sym\, G'-p')}\leq \intom{Q(\sym\, G-p)}\\
\label{liminfelen}
\leq \liminf_{\ep\to 0} \frac{1}{\ep^{2\alpha-2}}\intom{W_{el}(\nep y^{\ep}(P^{\ep})^{-1})}.
\end{eqnarray}
Collecting \eqref{liminfhard} and \eqref{liminfelen}, we obtain \eqref{liminftot}.
\end{proof}
\section{Construction of the recovery sequence}
\label{rec}
In this section, under some additional hypotheses on the sequence $(p^{\ep,0})$ {{and on $\gamma_d$}}, we prove that the lower bound obtained in Theorem \ref{liminfineq} is optimal by exhibiting a recovery sequence. 
\begin{teo}
\label{limsupineq}
Assume that $\alpha\geq 3$ {{and $\gamma_d$ is a finite union of disjoint (nontrivial) closed intervals (i.e., maximally connected sets) in $\partial \omega$}}. Let $p^0\in L^{\infty}(\Omega;\md)$ be such that there exists a sequence $(p^{\ep,0})\subset L^{\infty}(\Omega;\md)$ satisfying
\begin{eqnarray}
&&\label{hpip1}\|{p}^{\ep,0}\|_{L^{\infty}(\Omega;\md)}\leq C\quad\text{for every }\ep,\\
&&\label{hpip2}{p}^{\ep,0}\to {p}^0\quad\text{strongly in }L^1(\Omega;\md).
\end{eqnarray}
Assume also that for every $\ep$ the map ${P}^{\ep,0}:=Id+\ep^{\alpha-1}{p}^{\ep,0}$ satisfies $\det {P}^{\ep,0}=1$.
Let $(u,v,p)\in\cal{A}(u^0,v^0)$. Then, there exists a sequence $(y^{\ep}, P^{\ep})\in\cal{A}_{\ep}(\pep)$ such that, defining $u^{\ep}, v^{\ep}$ and $p^{\ep}$ as in \eqref{inplane}, \eqref{outofplane} and \eqref{defpep}, we have
\begin{eqnarray}
&&\label{rscid} y^{\ep}\to \Big(\begin{array}{c}x'\\0\end{array}\Big)\quad\text{strongly in }W^{1,2}(\Omega;\R^3),\\
&&\label{rscu} u^{\ep}\to u\quad\text{strongly in }W^{1,2}(\omega;\R^2),\\
&&\label{rscv} v^{\ep} \to v\quad\text{strongly in }W^{1,2}(\omega),\\
&&\label{rscp} p^{\ep}\to p\quad\text{strongly in }L^2(\Omega;\mthree).
\end{eqnarray}
Moreover, 
\begin{eqnarray}
\label{limsuptot}\lim_{\ep\to 0}\cal{J}^{\ep}_{\alpha}(y^{\ep},P^{\ep})=\cal{J}_{\alpha}(u,v,p),
\end{eqnarray}
where $\cal{J}^{\ep}_{\alpha}$ and $\cal{J}_{\alpha}$ are the functionals introduced in \eqref{scaleden}, \eqref{numlimfa} and \eqref{numlimf3}.
\end{teo}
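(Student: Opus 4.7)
The plan is to combine the construction of \cite[Sections 6.1--6.2]{FJM2} for the elastic deformation with the plastic-strain correction of \cite[Lemma 3.6]{MS}, and to verify the energy convergence by Taylor-expanding $W_{el}$, $W_{hard}$ and $D$ near the identity. Proceeding in three conceptual steps: first, approximate a generic triple $(u,v,p)\in\cal{A}(u^0,v^0)$ by triples with $u\in C^\infty(\overline{\omega};\R^2)$, $v\in C^\infty(\overline{\omega})$, $p\in C^\infty(\overline{\Omega};\md)$ that still match the prescribed boundary data on $\gamma_d$. This is one of the two places where the hypothesis that $\gamma_d$ is a finite union of disjoint closed intervals is essentially used: around each such interval one can extend $u-u^0$, $v-v^0$ and the normal derivative $\partial_\nu(v-v^0)$ by reflection to a one-sided neighborhood of $\omega$ and then mollify, thereby preserving the prescribed traces on $\gamma_d$. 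An analogous mollification, followed by a small additive correction to restore trace-freeness, takes care of $p$ and the sequence $p^{\ep,0}$.

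For smooth data, set
\[
y^\ep(x) := \begin{pmatrix} x' \\ \ep x_3 \end{pmatrix} + \begin{pmatrix} \ep^{\alpha-1} u(x') \\ \ep^{\alpha-2} v(x') \end{pmatrix} - \ep^{\alpha-2} x_3 \begin{pmatrix} \nabla' v(x') \\ 0 \end{pmatrix} + \ep^{\alpha} x_3 d(x') + \ep^{2\alpha-2}\zeta^{\ep}(x),
\]
where the vector $d(x')\in\R^3$ is determined via the linear system \eqref{linearmin} so that the third row and column of the scaled strain $G^\ep$ converge to the optimal values making $\sym G - p = \A(\sym G'-p')$ pointwise a.e., and $\zeta^\ep$ is a cutoff-type correction supported in a thin boundary strip near $\Gamma_d$ of width $\sim\ep^\tau$, $0<\tau<1$, that restores the equality $y^\ep = \pep$ on $\Gamma_d$ exactly. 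For the plastic strain set $P^\ep := Id + \ep^{\alpha-1}p + \ep^{2(\alpha-1)}q^\ep$, with $q^\ep$ solved implicitly so that $\det P^\ep = 1$; since $\tr{p}=0$ and $p\in L^\infty$, this correction exists for $\ep$ small by the implicit function theorem with $\|q^\ep\|_{L^\infty}$ uniformly bounded, and $P^\ep\in K$ eventually. By construction $(y^\ep,P^\ep)\in\cal{A}_\ep(\pep)$, and the convergences \eqref{rscid}--\eqref{rscp} are immediate.

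Since $p$ and $p^{\ep,0}$ are in $L^\infty$, the quadratic approximations \eqref{quadrwel} and \eqref{prh4} apply uniformly. A Taylor expansion of $F^\ep := \ep^{-(\alpha-1)}(\nep y^\ep(P^\ep)^{-1}-Id)$ shows that $F^\ep \to G-p$ strongly in $L^2$ with $\sym G - p = \A(\sym G'-p')$, hence
\[
\frac{1}{\ep^{2\alpha-2}}\intom{W_{el}(\nep y^\ep(P^\ep)^{-1})} \to \intom{Q_2(\sym G'-p')}.
\]
Similarly, \eqref{prh4} together with the $L^\infty$ bound on $p$ gives $\ep^{-(2\alpha-2)}\intom{W_{hard}(P^\ep)}\to \intom{\B{p}}$. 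For the dissipation I would use the upper bound already exploited in the Remark, namely $D(Id,F)\le H_D(\log F)$ for $F$ close to $Id$ in $SL(3)$, applied to $F = P^\ep(P^{\ep,0})^{-1} = Id + \ep^{\alpha-1}(p-p^{\ep,0}) + o(\ep^{\alpha-1})$; expanding the logarithm and using $p^{\ep,0}\to p^0$ in $L^1$ together with \eqref{growthh} yields $\ep^{-(\alpha-1)}\intom{D(P^{\ep,0},P^\ep)}\to \intom{H_D(p-p^0)}$.

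The main technical obstacle is the design of the boundary-layer corrector $\zeta^\ep$: one must simultaneously match $\pep$ exactly on $\Gamma_d$ and keep its energy contribution in the thin layer negligible in the limit. This is precisely where the structural hypothesis on $\gamma_d$ is fully exploited; as in \cite[Lemma 3.6]{MS}, it permits a locally one-dimensional cutoff construction near each interval of $\gamma_d$, with gradients that can be controlled interval by interval so that the cost in a strip of width $\ep^\tau$ is of order $\ep^{(1-\tau)\cdot c}$ for some $c>0$, hence negligible provided $\tau$ is chosen small enough. A standard diagonal extraction then removes the smoothness reduction and yields a recovery sequence for the original $(u,v,p)$.
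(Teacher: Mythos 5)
Your overall strategy (smooth the data, build an explicit ansatz, Taylor-expand the three energy terms) is the correct one, and you correctly identify the role of the hypothesis on $\gamma_d$ in the trace-preserving approximation step. However, your construction departs from the paper's in ways that create an unnecessary difficulty and then leave that difficulty unresolved. The paper never needs a boundary-layer corrector $\zeta^\ep$: it takes the third-row/column correction in the form $\ep^\alpha\int_{-1/2}^{x_3} d(x',s)\,ds$ with $d\in C^\infty_c(\Omega;\R^3)$, so this term vanishes identically in a neighbourhood of $\partial\omega\times(-\tfrac12,\tfrac12)$; combined with $u=u^0$, $v=v^0$, $\nabla' v=\nabla' v^0$ on $\gamma_d$ (preserved through the Lusin-type truncation of \cite[Proposition A.2]{FJM}) and $p\in C^\infty_c(\Omega;\md)$, the ansatz $y^\ep$ satisfies $y^\ep=\pep$ on $\Gamma_d$ exactly, with no further correction. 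The restriction that $d$ is compactly supported is then removed afterwards by density, exploiting that the limit elastic energy is a continuous quadratic functional of $d$ and that the pointwise minimizer realizing $Q_2$ can be $L^2$-approximated by compactly supported test functions. Your ansatz $\ep^\alpha x_3 d(x')$ with $d$ already chosen to be the pointwise minimizer forces $d\neq 0$ on $\gamma_d$, which is why you need $\zeta^\ep$; but you only assert that such a corrector with negligible energy exists, calling it ``the main technical obstacle'' without actually constructing it or estimating the elastic, hardening, and dissipation contributions in the boundary strip. As written, this is a genuine gap — the hardest step of your proof is left to the reader, and it is a step the actual proof does not need.

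Two smaller but real deviations. First, for the plastic recovery the paper simply sets $P^\ep:=\exp(\ep^{\alpha-1}p)$; since $p$ is trace-free this is automatically in $SL(3)$ and lands in $K$ for small $\ep$, making your implicit-function correction $q^\ep$ superfluous. Second, your dissipation estimate via $D(Id,F)\le H_D(\log F)$ with $F=P^\ep(P^{\ep,0})^{-1}$ needs more care: $\log F$ has in general a nonzero skew-symmetric part, and $H$ is $+\infty$ off $\md$, so one cannot apply $H_D$ to $\log F$ directly. The paper circumvents this by splitting via the triangle inequality \eqref{triang} through $\exp(\ep^{\alpha-1}p^{\ep,0})$, and then using the explicit competitor curve $c(t)=\exp(\ep^{\alpha-1}(p-p^{\ep,0})t)$ in \eqref{distid}, whose logarithmic derivative $\dot c c^{-1}=\ep^{\alpha-1}(p-p^{\ep,0})$ lies in $\md$ by construction. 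You should adopt this competitor-curve argument rather than expanding the matrix logarithm of the ratio.
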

\begin{proof}
For the sake of simplicity we divide the proof into two steps.\\
\emph{Step 1}\\
Let $(u,v,p)\in \cal{A}(u^0,v^0)$. We first remark that by a standard approximation argument we may assume that $p\in C^{\infty}_c(\Omega;\md)$. Moreover, we claim that we can always reduce to the case where $u\in W^{1,\infty}(\omega;\R^2)$ and $v\in W^{2,\infty}(\omega)$. That is, we can approximate the pair $(u,v)$ in the sense of \eqref{rscu}--\eqref{rscv} by a sequence of pairs $(u^{\lambda},v^{\lambda})$ in $W^{1,\infty}(\omega;\R^2)\times W^{2,\infty}(\omega)$ satisfying the same boundary conditions as $(u,v)$ on $\gamma_d$, and such that, for $\alpha>3$,
\begin{eqnarray}
\nonumber&&\lim_{\lambda\to +\infty}\intom{Q_2\Big(\sym \nabla'u^{\lambda}-x_3(\nabla')^2 v^{\lambda}-p'\Big)}\\
\label{recvel>}&&=\intom{Q_2\Big(\sym \nabla'u-x_3(\nabla')^2 v-p'\Big)},
\end{eqnarray}
 whereas for $\alpha=3$ 
 \begin{eqnarray}
\nonumber
&&\lim_{\lambda\to +\infty}\intom{Q_2\Big(\sym \nabla'u^{\lambda}+\frac{1}{2}\nabla'v^{\lambda}\otimes \nabla'v^{\lambda}-x_3(\nabla')^2 v^{\lambda}-p'\Big)}\\
\label{reccvel}&&=\intom{Q_2\Big(\sym \nabla'u+\frac{1}{2}\nabla'v\otimes \nabla'v-x_3(\nabla')^2 v-p'\Big)}.
\end{eqnarray}
By { the hypotheses on $\gamma_d$, we may apply} \cite[Proposition A.2]{FJM}, { and} for every $\lambda>0$ we construct a pair $(u^{\lambda},v^{\lambda})\in W^{1,\infty}({\omega};\R^2)\times W^{2,\infty}(\omega)$, such that
$(u^{\lambda},v^{\lambda},p)\in \cal{A}(u^{0},v^{0})$, 
\begin{equation}
\label{lusinest}
\|u^{\lambda}\|_{W^{1,\infty}(\omega;\R^2)}+\|v^{\lambda}\|_{W^{2,\infty}(\omega)}\leq C\lambda,
\end{equation}
and setting
$$\omega^{\lambda}:=\{x'\in\omega: u^{\lambda}(x')\neq u(x')\text{ or }v^{\lambda}(x')\neq v(x')\},$$
there holds
\begin{equation}
\label{measset}
\lim_{\lambda\to +\infty}\lambda^2\cal{L}^2(\omega^{\lambda})=0. 
\end{equation}
Now, by \eqref{lusinest} we obtain
\begin{eqnarray*}
\|u^{\lambda}-u\|_{W^{1,2}(\omega;\R^2)}\leq C\big(\|u^{\lambda}-u\|_{L^{2}(\omega^{\lambda};\R^2)}+\|\nabla'u^{\lambda}-\nabla'u\|_{L^{2}(\omega^{\lambda};\M^{2\times 2})}\big)\\
\leq C\big(\|u\|_{L^{2}(\omega^{\lambda};\R^2)}+\|\nabla'u\|_{L^{2}(\omega^{\lambda};\M^{2\times 2})}+\lambda \big(\cal{L}^2(\omega^{\lambda})\big)^{\frac{1}{2}}\big)
\end{eqnarray*}
and, analogously
\begin{eqnarray*}
\|v^{\lambda}-v\|_{W^{2,2}(\omega;\R^2)}\leq C\big(\|v\|_{L^{2}(\omega^{\lambda})}+\|\nabla'v\|_{L^{2}(\omega^{\lambda};\R^2)}+\|(\nabla')^2 v\|_{L^{2}(\omega^{\lambda};\M^{2\times 2})}+\lambda \big(\cal{L}^2(\omega^{\lambda})\big)^{\frac{1}{2}}\big).
\end{eqnarray*}
Hence, by \eqref{measset} we deduce
\begin{equation}
\label{culam}
u^{\lambda}\to u\quad\text{strongly in }W^{1,2}(\omega;\R^2)
\end{equation}
and
\begin{equation}
\label{cvlam}
v^{\lambda}\to v\quad\text{strongly in }W^{2,2}(\omega),
\end{equation}
as $\lambda\to +\infty$. Therefore, in particular
\begin{equation}
\label{morest}
\nabla' v^{\lambda}\to \nabla'v\quad\text{strongly in }L^p(\omega;\R^2)\text{ for every }p\in[2,+\infty).
\end{equation}
By \eqref{culam}, \eqref{cvlam} and \eqref{morest} we obtain \eqref{recvel>} and \eqref{reccvel}.\\
\emph{Step 2}\\
To complete the proof of the theorem we shall prove that for every triple $(u,v,p)\in\cal{A}(u^0,v^0)$, with $u\in W^{1,\infty}({\omega};\R^2), v\in W^{2,\infty}(\omega)$ and $p\in C^{\infty}_c(\Omega;\md)$ we can construct a sequence $(y^{\ep}, P^{\ep})\in\cal{A}(\pep)$ satisfying \eqref{rscid}--\eqref{limsuptot}.

To this purpose, consider the functions
$$P^{\ep}:=\exp(\ep^{\alpha-1}p)\quad\text{and}\quad p^{\ep}:=\frac{1}{\ep^{\alpha-1}}(\exp(\ep^{\alpha-1}p)-Id).$$
Since $p\in C^{\infty}_c(\Omega;\md)$, it is immediate to see that $\det P^{\ep}(x)=1$ for every $\ep$ and for all $x\in \Omega$. Moreover, there exists $\ep_0>0$ such that 
$$P^{\ep}(x)\in K\quad\text{for every }x\in\Omega\text{ and for all }0\leq\ep<\ep_0,$$
and there holds 
$$p^{\ep}\to p\quad\text{uniformly in }\Omega,$$
which in turn implies \eqref{rscp}. Furthermore, $$\|P^{\ep}-Id\|_{L^{\infty}(\Omega;\mthree)}\leq C\ep^{\alpha-1},$$
and by \eqref{prh4}, for every $\delta>0$ there exists $\ep_{\delta}$ such that if $0\leq \ep<\ep_{\delta}$ there holds
\begin{eqnarray*}
\Big|\frac{1}{\ep^{2\alpha-2}}\intom{W_{hard}(P^{\ep})}-\intom{B(p^{\ep})}\Big|\leq \delta \intom{B(p^{\ep})}.
\end{eqnarray*}
By \eqref{rscp} we deduce that
\begin{equation}
\label{limhard}
\lim_{\ep\to 0}\frac{1}{\ep^{2\alpha-2}}\intom{W_{hard}(P^{\ep})}=\intom{B(p)}.
\end{equation}

To study the dissipation potential, we first remark that by \eqref{hpip1}, for $\ep$ small enough, there holds 
\begin{equation}
\label{datink}
\exp(\ep^{\alpha-1}p^{\ep,0}(x))({P}^{\ep,0})^{-1}(x)\in K\quad\text{for every }x\in\Omega.
\end{equation}
Hence, by \eqref{triang} and \eqref{prd2} the following estimate holds true:
\begin{eqnarray*}
\frac{1}{\ep^{\alpha-1}}\intom{D({P}^{\ep,0},P^{\ep})}&\leq& \frac{1}{\ep^{\alpha-1}}\intom{D({P}^{\ep,0},\exp(\ep^{\alpha-1}{p}^{\ep,0}))}\\
&+&\frac{1}{\ep^{\alpha-1}}\intom{D(\exp(\ep^{\alpha-1}{p}^{\ep,0}),\exp(\ep^{\alpha-1}p))}\\
\\&\leq&\frac{C}{\ep^{\alpha-1}}\intom{|\exp(\ep^{\alpha-1}{p}^{\ep,0})({P}^{\ep,0})^{-1}-Id|}\\
&+&\frac{1}{\ep^{\alpha-1}}\intom{D(Id, \exp(\ep^{\alpha-1}(p-{p}^{\ep,0})))}.\\
\end{eqnarray*}
By the positive homogeneity of $H_{D}$ and taking $c(t)=\exp(\ep^{\alpha-1}(p-{p}^{\ep,0})t)$ in \eqref{distid}, we obtain
$$\frac{1}{\ep^{\alpha-1}}\intom{D(Id, \exp(\ep^{\alpha-1}(p-{p}^{\ep,0})))}\leq\intom{H_D(p-{p}^{\ep,0})}.$$
On the other hand, by \eqref{hpip1} there holds
\begin{eqnarray*}
\intom{|\exp(\ep^{\alpha-1}{p}^{\ep,0})({P}^{\ep,0})^{-1}-Id|}\leq c_K\intom{|\exp(\ep^{\alpha-1}{p}^{\ep,0})-Id-\ep^{\alpha-1}{p}^{\ep,0}|}\leq C\ep^{2\alpha-2}.
\end{eqnarray*}
Collecting the previous estimates we deduce
$$\frac{1}{\ep^{\alpha-1}}\intom{D({P}^{\ep,0},P^{\ep})}\leq\intom{H_D(p-{p}^{\ep,0})}+C\ep^{\alpha-1},$$
which in turn, by \eqref{hpip2}, yields
\begin{equation}
\label{limdis}
\limsup_{\ep\to 0}\frac{1}{\ep^{\alpha-1}}\intom{D({P}^{\ep,0},P^{\ep})}\leq \intom{H_D(p-{p}^0)}.
\end{equation}

Let $d\in C^{\infty}_c(\Omega;\R^3)$ and consider the deformations
$$y^{\ep}(x):=\Big(\begin{array}{c}x'\\\ep x_3\end{array}\Big)+\ep^{\alpha-1}\Big(\begin{array}{c}u(x')-x_3\nabla' v(x')\\0\end{array}\Big)+\ep^{\alpha-2}\Big(\begin{array}{c}0\\v(x')\end{array}\Big)+\ep^{\alpha}\int_{-\frac{1}{2}}^{x_3}{d(x',s)\,ds}$$
for every $x\in\Omega$. It is immediate to see that the sequence $(y^{\ep})$ fulfills both \eqref{bddatum} and \eqref{rscid}. We note that
$$u^{\ep}(x')=u(x')+\ep\intt{\int_{-\frac{1}{2}}^{x_3}{d'(x',s)\,ds}}$$
and
$$v^{\ep}(x')=v(x')+\ep^2\intt{\int_{-\frac{1}{2}}^{x_3}{d_3(x',s)\,ds}}$$
for every $x'\in\omega$, hence both \eqref{rscu} and \eqref{rscv} hold true. To complete the proof of the theorem, it remains to show that for $\alpha>3$ 
\begin{equation}
\label{clrs1}\lim_{\ep\to 0}\frac{1}{\ep^{2\alpha-2}}\intom{W_{el}(\nep y^{\ep}(P^{\ep})^{-1})}
=\intom{Q\Big(\sym\Big(\begin{array}{c}\nabla'u-x_3(\nabla')^2 v\\0\end{array}\Big|d\Big)-p\Big)},
\end{equation}
and for $\alpha=3$,
\begin{eqnarray}
\nonumber&&\lim_{\ep\to 0}\frac{1}{\ep^{2\alpha-2}}\intom{W_{el}(\nep y^{\ep}(P^{\ep})^{-1})}\\
\nonumber&&=\intom{Q\Big(\sym\Big(\begin{array}{c}\nabla'u+\frac{1}{2}\nabla'v\otimes\nabla'v-x_3(\nabla')^2 v\\0\end{array}\Big|\begin{array}{c}d'\\d_3+|\nabla'v|^2\end{array}\Big)-p\Big)}.\\
\label{clrs2}
\end{eqnarray}
Indeed, if \eqref{clrs1} holds, then by a standard approximation argument we may assume that
$$Q\Big(\sym\Big(\begin{array}{c}\nabla'u-x_3(\nabla')^2 v\\0\end{array}\Big|d\Big)-p\Big)=Q_2\big(\sym\,\nabla'u-x_3(\nabla')^2 v-p'\big).$$
Analogously, if \eqref{clrs2} holds we may assume that
\begin{eqnarray*}
&&Q\Big(\sym\Big(\begin{array}{c}\nabla'u+\frac{1}{2}\nabla'v\otimes\nabla'v-x_3(\nabla')^2 v\\0\end{array}\Big|\begin{array}{c}d'\\d_3+\tfrac{|\nabla'v|^2}{2}\end{array}\Big)-p\Big)\\
&&=Q_2\Big(\sym\,\nabla'u+\frac{1}{2}\nabla'v\otimes\nabla'v-x_3(\nabla')^2 v-p'\Big).
\end{eqnarray*}
In both cases by \eqref{limhard}, \eqref{limdis}, and Theorem \ref{liminfineq}, we obtain \eqref{limsuptot}.

To prove \eqref{clrs1} and \eqref{clrs2} we first note that
\begin{eqnarray*}
\nep y^{\ep}=Id+\ep^{\alpha-1}\Big(\begin{array}{c}\nabla' u-x_3(\nabla')^2 v\\0\end{array}\Big|d\Big)+\ep^{\alpha-2}\Big(\begin{array}{cc}0&-\nabla'v\\(\nabla'v)^T&0\end{array}\Big)+O(\ep^{\alpha}).
\end{eqnarray*}
Hence, in particular, $\det (\nep y^{\ep})>0$ for $\ep$ small enough.
On the other hand, by the frame-indifference hypothesis (H3), there holds
$$W_{el}(\nep y^{\ep}(P^{\ep})^{-1})=W_{el}\Big(\sqrt{(\nep y^{\ep})^T\nep y^{\ep}}(P^{\ep})^{-1}\Big)\quad\text{ a.e. in }\Omega.$$
A direct computation yields
\begin{eqnarray*}
&&\sqrt{(\nep y^{\ep})^T\nep y^{\ep}}=Id+\ep^{\alpha-1}\sym\Big(\begin{array}{c}\nabla' u-x_3(\nabla')^2 v\\0\end{array}\Big|d\Big)\\
&&+\frac{\ep^{2\alpha-4}}{2}\Big(\begin{array}{cc}\nabla'v\otimes \nabla'v&0\\0&|\nabla' v|^2\end{array}\Big)+o(\ep^{\alpha-1}),
\end{eqnarray*}
and
$$W_{el}\big(\nep y^{\ep}(P^{\ep})^{-1}\big)=W_{el}\big(Id+\ep^{\alpha-1}M_{\alpha}+o(\ep^{\alpha-1})\big)\quad\text{ a.e. in }\Omega,$$
where
$$M_{\alpha}:=\begin{cases}\sym\Big(\begin{array}{c}\nabla' u-x_3(\nabla')^2 v\\0\end{array}\Big|d\Big)-p&\text{if }\alpha>3,\\
\sym\Big(\begin{array}{c}\nabla' u+\frac{1}{2}\nabla'v\otimes \nabla'v-x_3(\nabla')^2 v\\0\end{array}\Big|\begin{array}{c}d'\\d_3+\tfrac{|\nabla'v|^2}{2}\end{array}\Big)-p&\text{if }\alpha=3.\end{cases}$$
Fix $\delta>0$. For every $\alpha\geq 3$ we have $M_{\alpha}\in L^{\infty}(\Omega;\mthree)$, therefore for $\ep$ small enough 
$$\|\ep^{\alpha-1}M_{\alpha}+o(\ep^{\alpha-1})\|_{L^{\infty}(\Omega;\mthree)}\leq c_{el}(\delta).$$
 By \eqref{quadrwel}, we deduce
$$ \limsup_{\ep\to 0}\Big|\frac{1}{\ep^{2\alpha-2}}\intom{W_{el}(\nep y^{\ep}(P^{\ep})^{-1})}-\intom{Q(M_{\alpha})}-\frac{o(\ep^{2\alpha-2})}{\ep^{2\alpha-2}}\Big|\leq \delta\intom{Q(M_{\alpha})}.$$
Claims \eqref{clrs1} and \eqref{clrs2} follow now by letting $\delta$ tend to zero.
\end{proof}
\section{Convergence of minimizers and characterization of the limit functional}
\label{finsec}
In this section we deduce convergence of almost minimizers of the three-dimensional energies to minimizers of the limit functional and we show some examples where a characterization of the limit functional can be provided in terms of two-dimensional quantities.

The compactness and liminf inequalities proved in Theorem \ref{liminfineq} and the limsup inequality deduced in Theorem \ref{limsupineq} allow us to obtain the main result of the paper:
\begin{teo}
\label{minpts}
Assume that $\alpha\geq 3$ { and $\gamma_d$ is a finite union of disjoint (nontrivial) closed intervals in the relative topology of $\partial \omega$}. Let $p^0\in L^{\infty}(\Omega;\md)$ be such that there exists a sequence $(p^{\ep,0})\subset L^{\infty}(\Omega;\md)$ satisfying
\begin{eqnarray*}
&&\|p^{\ep,0}\|_{L^{\infty}(\Omega;\md)}\leq C,\\
&&p^{\ep,0}\to p^0\quad\text{strongly in }L^1(\Omega;\md).
\end{eqnarray*}
Assume also that for every $\ep$ the map $P^{\ep,0}:=Id+\ep^{\alpha-1}p^{\ep,0}$ satisfies $\det P^{\ep,0}=1$ a.e. in $\Omega$. Let $\pep$ be defined as in \eqref{defbddat} and let $\cal{J}^{\ep}_{\alpha}$ and $\cal{J}_{\alpha}$ be the functionals given by \eqref{scaleden}, \eqref{numlimfa} and \eqref{numlimf3}. For every $\ep>0$, let $(y^{\ep},P^{\ep})\in\cal{A}_{\ep}(\pep)$ be such that
\begin{equation}
\label{defam}
\cal{J}^{\ep}_{\alpha}(y^{\ep},P^{\ep})-\inf_{(y,P)\in\cal{A}_{\ep}(\pep)}\cal{J}^{\ep}_{\alpha}(y,P)\leq s_{\ep},
\end{equation}
where $s_{\ep}\to 0^+$ as $\ep\to 0$. Finally, let $u^{\ep}$, $v^{\ep}$ and $p^{\ep}$ be the displacements and scaled plastic strain introduced in \eqref{inplane}, \eqref{outofplane} and \eqref{defpep}.
Then, there exists a triple $(u,v,p)\in \cal{A}(u^0,v^0)$ such that, up to subsequences, there holds
\begin{eqnarray}
\label{cmu}&&u^{\ep}\to u\quad\text{strongly in }W^{1,2}(\omega;\R^2),\\
\label{cmv} &&v^{\ep}\to v\quad\text{strongly in }W^{1,2}(\omega),\\
\label{cmp}&&p^{\ep}\to p\quad\text{strongly in }L^2(\Omega;\mthree).
\end{eqnarray}
Moreover, $(u,v,p)$ is a minimizer of $\cal{J}_{\alpha}$ and
\begin{equation}
\label{cme}
\lim_{\ep\to 0}\cal{J}^{\ep}_{\alpha}(y^{\ep},P^{\ep})=\cal{J}_{\alpha}(u,v,p).
\end{equation}
\end{teo}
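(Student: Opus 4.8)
The plan is to run the standard argument deriving convergence of almost‑minimizers from a $\Gamma$‑convergence result: Theorem~\ref{liminfineq} supplies compactness and the lower bound, Theorem~\ref{limsupineq} supplies the matching upper bound, and the resulting weak convergences are then upgraded to strong ones by exploiting the positive definiteness of $B$ and $Q_2$ together with the convergence of the total energy.

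First I would note that $\cal{A}(u^0,v^0)\neq\emptyset$: since $u^0\in W^{1,\infty}(\omega;\R^2)$, $v^0\in W^{2,\infty}(\omega)$ and $p^0\in L^\infty(\Omega;\md)\subset L^2(\Omega;\md)$, the triple $(u^0,v^0,0)$ (with $u^0,v^0$ regarded as $x_3$‑independent maps on $\Omega$) lies in $\cal{A}(u^0,v^0)$, so $m:=\inf_{\cal{A}(u^0,v^0)}\cal{J}_\alpha<+\infty$. The hypotheses of Theorem~\ref{minpts} on $\gamma_d$, on $(p^{\ep,0})$ and on $\det P^{\ep,0}$ are exactly those required by Theorem~\ref{limsupineq}; applying it to $(u^0,v^0,0)$ yields $(\bar y^\ep,\bar P^\ep)\in\cal{A}_\ep(\pep)$ with $\cal{J}^\ep_\alpha(\bar y^\ep,\bar P^\ep)\to\cal{J}_\alpha(u^0,v^0,0)$. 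By \eqref{defam},
\[
\cal{J}^\ep_\alpha(y^\ep,P^\ep)\le\cal{J}^\ep_\alpha(\bar y^\ep,\bar P^\ep)+s_\ep,
\]
hence $\sup_\ep\cal{J}^\ep_\alpha(y^\ep,P^\ep)<+\infty$; since the two contributions in \eqref{scaleden} are nonnegative, \eqref{engest2} and \eqref{engest3} hold, and $(p^{\ep,0})$, being $L^\infty$‑bounded and $L^1$‑convergent to $p^0$, converges to $p^0$ weakly in $L^2(\Omega;\mthree)$. Theorem~\ref{liminfineq} then provides a subsequence (not relabelled) and $(u,v,p)\in\cal{A}(u^0,v^0)$ with $u^\ep\rightharpoonup u$ weakly in $W^{1,2}(\omega;\R^2)$, $v^\ep\to v$ strongly in $W^{1,2}(\omega)$ (this is already \eqref{cmv}) and $p^\ep\rightharpoonup p$ weakly in $L^2(\Omega;\mthree)$, together with the lower bounds \eqref{liminftot}--\eqref{liminfdiss}; recalling \eqref{gaff}--\eqref{Ga>3}, these combine into $\cal{J}_\alpha(u,v,p)\le\liminf_\ep\cal{J}^\ep_\alpha(y^\ep,P^\ep)$.

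For an arbitrary competitor $(\tilde u,\tilde v,\tilde p)\in\cal{A}(u^0,v^0)$, Theorem~\ref{limsupineq} furnishes a recovery sequence $(\tilde y^\ep,\tilde P^\ep)\in\cal{A}_\ep(\pep)$ with $\cal{J}^\ep_\alpha(\tilde y^\ep,\tilde P^\ep)\to\cal{J}_\alpha(\tilde u,\tilde v,\tilde p)$; combining with \eqref{defam} gives $\limsup_\ep\cal{J}^\ep_\alpha(y^\ep,P^\ep)\le\cal{J}_\alpha(\tilde u,\tilde v,\tilde p)$. Together with the previous bound this shows $\cal{J}_\alpha(u,v,p)\le\cal{J}_\alpha(\tilde u,\tilde v,\tilde p)$ for every admissible triple, so $(u,v,p)$ minimizes $\cal{J}_\alpha$; taking $(\tilde u,\tilde v,\tilde p)=(u,v,p)$ makes $\liminf$ and $\limsup$ coincide and proves \eqref{cme}.

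It remains to upgrade \eqref{cmu} and \eqref{cmp} to strong convergences. Write $\cal{J}^\ep_\alpha(y^\ep,P^\ep)=F_1^\ep+F_2^\ep+F_3^\ep$ with $F_1^\ep:=\ep^{2-2\alpha}\intom{W_{el}(\nep y^\ep(P^\ep)^{-1})}$, $F_2^\ep:=\ep^{2-2\alpha}\intom{W_{hard}(P^\ep)}$ and $F_3^\ep:=\ep^{1-\alpha}\intom{D(P^{\ep,0},P^\ep)}$. By \eqref{liminfelen}, \eqref{liminfhard} and \eqref{liminfdiss}, each $\liminf_\ep F_i^\ep$ dominates the corresponding term of $\cal{J}_\alpha(u,v,p)$; since by \eqref{cme} their sum converges exactly to $\cal{J}_\alpha(u,v,p)$, a subsequence argument (extract so that each $F_i^\ep$ converges; the limits are finite, not below the respective lower bounds, and add up to $\cal{J}_\alpha(u,v,p)$, hence coincide with those bounds) forces $F_i^\ep$ to converge to the $i$‑th term of $\cal{J}_\alpha(u,v,p)$ for each $i$. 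From $F_2^\ep\to\intom{B(p)}$, truncating on the sets $O_\ep:=\{x:\ep^{\alpha-1}|p^\ep(x)|\le c_h(\delta)\}$ as in the proof of \eqref{liminfhard} and using in addition the lower bound \eqref{prh3} together with the quadratic‑form estimate \eqref{grbelowh} off $O_\ep$, and then letting $\delta\to0$, one obtains $\intom{B(p^\ep)}\to\intom{B(p)}$; since $B$ is a positive definite quadratic form and $p^\ep\rightharpoonup p$ weakly in $L^2$, expanding $B(p^\ep-p)$ yields $p^\ep\to p$ strongly in $L^2(\Omega;\mthree)$, i.e. \eqref{cmp}. Likewise, from $F_1^\ep\to\intom{Q_2(\sym\nabla'u+\tfrac12\nabla'v\otimes\nabla'v-x_3(\nabla')^2 v-p')}$ (without the Von K\'arm\'an term when $\alpha>3$), an analogous truncation argument combined with the expansion of $G^\ep$ in \eqref{defgep} and the characterization of $Q_2$ through the operator $\A$ gives strong $L^2$‑convergence of the $2\times2$ effective elastic strain; averaging in $x_3$ (which kills the only term converging merely weakly, namely $x_3(\nabla')^2 v^\ep$) and using the already established strong convergences of $v^\ep$, of $\ep^{2-\alpha}\nabla'y^\ep_3$ (see \eqref{cptnep3}) and of $p^\ep$, one deduces $\sym\nabla'u^\ep\to\sym\nabla'u$ strongly in $L^2(\omega)$; Korn's inequality and the Dirichlet condition $u^\ep=u^0$ on $\gamma_d$ then yield $u^\ep\to u$ strongly in $W^{1,2}(\omega;\R^2)$, i.e. \eqref{cmu}. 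The main obstacle is precisely this last step: the two truncation arguments (needed because $W_{hard}$ and $W_{el}$ are only quadratic near the identity, with the large‑strain set of size $O(\ep^{2\alpha-2})$ to be controlled) and the $x_3$‑averaging trick for $u^\ep$ both follow the template of \cite{FJM2} and \cite{MS} but must be carried out carefully.
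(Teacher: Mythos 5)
Your proposal is correct and follows essentially the same route as the paper: the weak convergences, minimality and energy convergence \eqref{cme} come from the standard $\Gamma$-convergence argument combining Theorems \ref{liminfineq} and \ref{limsupineq}, and the upgrade to the strong convergences \eqref{cmu} and \eqref{cmp} via term-by-term convergence of the energy and positive definiteness of $B$ and $Q_2$ is exactly the argument the paper delegates to \cite[Corollaries 5.2 and 5.3]{D2}. The only difference is that you spell out the strong-convergence step, which the paper merely cites.
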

\begin{proof}[Proof]
By Theorems \ref{liminfineq} and \ref{limsupineq} and by standard arguments in $\Gamma$-convergence we deduce \eqref{cmv}, we show that
\begin{eqnarray*}
&&u^{\ep}\deb u\quad\text{weakly in }W^{1,2}(\omega;\R^2),\\
&&p^{\ep}\deb p\quad\text{weakly in }L^2(\Omega;\mthree),
\end{eqnarray*}
where $(u,v,p)\in \cal{A}(u^0,v^0)$ is a minimizer of $\cal{J}_{\alpha}$, and we prove \eqref{cme}. Strong convergence of $u^{\ep}$ and $p^{\ep}$ follows by \eqref{cme} and by adaptating \cite[Corollaries 5.2 and 5.3]{D2}.
\end{proof}

We remark that the limit plastic strain $p$ depends nontrivially on the $x_3$ variable. Therefore, the limit functionals $\cal{J}_{\alpha}$ cannot, in general, be expressed in terms of two-dimensional quantities only. A characterization of the functionals in terms of the zeroth and first order moments of $p$ can be obtained arguing as follows. Denote by $\pzero, \puno \in L^2(\omega;\md)$ and $\pdue\in L^2(\Omega;\md)$ the following orthogonal components (in the sense of $L^2(\Omega;\md)$) of the plastic strain $p$:
$$\pzero(x'):=\intt{p(x',x_3)},\quad\puno(x'):=12\intt{x_3 p(x',x_3)}\quad\text{for a.e. }x'\in\omega,$$
and
$$\pdue(x):=p(x)-\pzero(x')-x_3\puno(x')\quad\text{for a.e. }x\in\Omega.$$
Then the functionals $\cal{J}_{\alpha}$ can be written in terms of $\pzero,\puno, \pdue$ as
\begin{eqnarray*}
\cal{J}_{\alpha}(u,v,p)&&=\intomm{Q_2(\sym\nabla' u-\pzero')}+\frac{1}{12}\intomm{Q_2((\nabla')^2 v+\puno')}\\
&&+\intom{Q_2(p_{\perp}')}+\intomm{\B{\pzero}}+\frac{1}{12}\intomm{\B{\puno}}\\
&&+\intom{\B{\pdue}}+\intom{H_D(p-p^0)},
\end{eqnarray*}
for $\alpha>3$, and
\begin{eqnarray*}
\cal{J}_3(u,v,p)&&=\intomm{Q_2\big(\sym\nabla' u+\tfrac{1}{2}\nabla' v\otimes\nabla' v-\pzero'\big)}\\
&&+\frac{1}{12}\intomm{Q_2((\nabla')^2 v+\puno')}+\intom{Q_2(\pdue')}+\intomm{\B{\pzero}}\\
&&+\frac{1}{12}\intomm{\B{\puno}}+\intom{\B{\pdue}}+\intom{H_D(p-p^0)},
\end{eqnarray*}
for every $(u,v,p)\in \cal{A}(u^0,v^0)$.

Under additional hypothesis on the boundary data and the preexistent limit plastic strain $p^0$, some two-dimensional characterizations of the limit model can be deduced in the case $\alpha>3$. To this purpose, we introduce the reduced functionals
\begin{equation}
\label{redmom0}
\bar{\cal{J}}_{\alpha}(u,\pzero):=\intomm{Q_2(\sym\nabla' u-\pzero')}+\intomm{\B{\pzero}}+\intomm{H_D(\pzero-\pzero^0)}
\end{equation}
for every $(u,\pzero)\in W^{1,2}(\omega;\R^2)\times L^2(\omega;\md)$ such that $u=u^0$ $\cal{H}^1$ - a.e. on $\gamma_d$, and
\begin{equation}
\label{redmom1}
\hat{\cal{J}}_{\alpha}(v,\puno):=\intomm{Q_2((\nabla')^2 v+\puno')}\\
\intomm{\B{\puno}}+\intomm{H_D(\puno-\puno^0)},
\end{equation}
for every $(v,\puno)\in W^{2,2}(\omega)\times L^2(\omega;\md)$ such that $v=v^0$ and $\nabla' v=\nabla' v^0$ $\cal{H}^1$ - a.e. on $\gamma_d$.

We first show an example where $\cal{J}_{\alpha}$ reduces to $\bar{\cal{J}}_{\alpha}$, that is the limit model depends just on the in-plane displacement and the zeroth moment of the plastic strain.
\begin{teo}
\label{teoz}
Under the hypothesis of Theorem \ref{minpts}, if $\alpha> 3$, $p^0=\bar{p}^0$, with $\bar{p}^0\in L^{\infty}(\omega;\md)$, and $v^0=0$
then, denoting by $\pzero$ the zeroth moment of the limit plastic strain $p$, the pair $(u,\bar{p})$ is a minimizer of $\bar{\cal{J}}_{\alpha}$ and
$$\lim_{\ep\to 0}\cal{J}^{\ep}_{\alpha}(y^{\ep},P^{\ep})=\bar{\cal{J}}_{\alpha}(u,\bar{p}).$$
\end{teo}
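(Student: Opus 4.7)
The plan is to exploit the orthogonal decomposition $p = \pzero + x_3 \puno + \pdue$ together with the $x_3$-independence of $p^0 = \bar{p}^0$ and the vanishing boundary datum $v^0 = 0$, so as to force all ``non-zeroth-moment'' contributions in the limit energy to vanish. By Theorem \ref{minpts}, we already obtain a limit triple $(u,v,p) \in \cal{A}(u^0,0)$ which minimizes $\cal{J}_\alpha$ and realizes the energy convergence $\cal{J}^{\ep}_\alpha(y^\ep,P^\ep) \to \cal{J}_\alpha(u,v,p)$. Using the two-dimensional rewriting of $\cal{J}_\alpha$ for $\alpha > 3$ displayed just after Theorem \ref{minpts}, this energy splits into six nonnegative quadratic contributions in $\pzero, \puno, \pdue$ and $v$, plus the dissipation $\intom{H_D(p - \bar{p}^0)}$.

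The key ingredient will be a Jensen-type lower bound for the dissipation. Since $\bar{p}^0$ is independent of $x_3$, and since $\intt{x_3\,dx_3} = 0 = \intt{\pdue\,dx_3}$ by construction of the decomposition, the convexity of $H_D$ yields, pointwise in $x' \in \omega$,
\begin{equation*}
\intt{H_D\big(\pzero(x') + x_3\puno(x') + \pdue(x',x_3) - \bar{p}^0(x')\big)} \geq H_D\big(\pzero(x') - \bar{p}^0(x')\big).
\end{equation*}
Integrating in $x'$, we obtain $\intom{H_D(p - \bar{p}^0)} \geq \intomm{H_D(\pzero - \bar{p}^0)}$.

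The second step will be to test the minimality of $(u,v,p)$ against the competitor $(u,0,\pzero) \in \cal{A}(u^0,0)$, where $\pzero$ is regarded as a map on $\Omega$ constant in $x_3$. A direct evaluation using the two-dimensional decomposition of $\cal{J}_\alpha$ gives $\cal{J}_\alpha(u,0,\pzero) = \bar{\cal{J}}_\alpha(u,\pzero)$. Combined with the Jensen bound above,
\begin{equation*}
\cal{J}_\alpha(u,v,p) \leq \cal{J}_\alpha(u,0,\pzero) \leq \cal{J}_\alpha(u,v,p),
\end{equation*}
so all inequalities are equalities. A term-by-term comparison, using the positive definiteness of $Q_2$ on symmetric matrices and the coercivity \eqref{grbelowh} of $B$, then forces $\puno \equiv 0$, $\pdue \equiv 0$ and $(\nabla')^2 v \equiv 0$ a.e. in $\omega$. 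Since $v$ is consequently affine and satisfies $v = |\nabla v| = 0$ on $\gamma_d$ (which has positive $\cal{H}^1$-measure), this gives $v \equiv 0$, and hence $p = \pzero =: \bar{p}$ depends only on $x'$.

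Finally, to conclude that $(u,\bar{p})$ minimizes $\bar{\cal{J}}_\alpha$, let $(\tilde u,\tilde q) \in W^{1,2}(\omega;\R^2)\times L^2(\omega;\md)$ be any admissible competitor with $\tilde u = u^0$ on $\gamma_d$. Extending $\tilde q$ trivially in $x_3$, the triple $(\tilde u,0,\tilde q)$ lies in $\cal{A}(u^0,0)$ and satisfies $\cal{J}_\alpha(\tilde u,0,\tilde q) = \bar{\cal{J}}_\alpha(\tilde u,\tilde q)$, so minimality of $(u,0,\bar{p})$ for $\cal{J}_\alpha$ gives $\bar{\cal{J}}_\alpha(u,\bar{p}) \leq \bar{\cal{J}}_\alpha(\tilde u,\tilde q)$. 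The energy convergence \eqref{cme} then rewrites as $\lim_{\ep\to 0}\cal{J}^\ep_\alpha(y^\ep,P^\ep) = \bar{\cal{J}}_\alpha(u,\bar{p})$. The delicate step is the Jensen inequality for $H_D$: it hinges on the $x_3$-independence of $\bar{p}^0$ together with the mutual orthogonality of $\pzero$, $x_3\puno$ and $\pdue$, which is precisely why the theorem assumes $p^0$ of the special form $\bar{p}^0(x')$ and $v^0 = 0$; the rest is bookkeeping combined with the coercivity of $Q_2$ and $B$.
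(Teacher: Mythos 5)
Your proposal is correct. The Jensen inequality $\intom{H_D(p-\bar{p}^0)}\geq\intomm{H_D(\pzero-\bar{p}^0)}$ is exactly the one in the paper, but from there you diverge: the paper returns to the $\ep$-level by constructing an explicit recovery sequence $\tilde{P}^\ep=\exp(\ep^{\alpha-1}\bar{p})$, $\tilde{y}^\ep$ adapted from Theorem \ref{limsupineq}, and then chains $\cal{J}_\alpha(u,v,p)\geq\bar{\cal{J}}_\alpha(u,\bar{p})=\lim\cal{J}^\ep_\alpha(\tilde{y}^\ep,\tilde{P}^\ep)\geq\lim\cal{J}^\ep_\alpha(y^\ep,P^\ep)=\cal{J}_\alpha(u,v,p)$ using almost-minimality. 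You instead stay entirely at the $\Gamma$-limit level: you test the minimality of $(u,v,p)$, already guaranteed by Theorem \ref{minpts}, against the competitor $(u,0,\pzero)\in\cal{A}(u^0,0)$, and you identify $\cal{J}_\alpha(u,0,\pzero)=\bar{\cal{J}}_\alpha(u,\pzero)$ via the moment decomposition of $\cal{J}_\alpha$. This is cleaner in one respect (no re-derivation of a recovery sequence, no appeal to an ``adaptation of Theorem \ref{limsupineq}''), and it buys you a bit more as a by-product: the term-by-term comparison after the squeeze forces $\puno=\pdue=0$ and $v\equiv 0$, which the paper does not record explicitly. The paper's route, conversely, makes more transparent why the result is consistent with the $\ep$-level almost-minimization in \eqref{defam}. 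One cosmetic remark: what the pointwise Jensen step actually needs is that $x_3\puno+\pdue$ has vanishing $x_3$-average, not the full $L^2$-orthogonality of the three components; the latter is true but not the hypothesis you invoke, so it would be cleaner to phrase it that way. Also note that since $(u,v,p)$ in Theorem \ref{minpts} is obtained only along a subsequence, your conclusion holds along that same subsequence, as in the paper.
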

\begin{proof}
By Jensen inequality, 
$$\intom{H_D(p-p^0)}\geq \intomm{H_D(\pzero-\pzero^0)},$$
{{hence there holds
$$\cal{J}_{\alpha}(u,v,p)\geq \bar{\cal{J}}_{\alpha}(u,\bar{p}).$$
On the other hand, by setting
$$\tilde{P}^{\ep}:=\exp{(\ep^{\alpha-1}\bar{p})}$$
and 
$$\tilde{y}^{\ep}:=\Big(\begin{array}{c}x',\\\ep x_3\end{array}\Big)+\ep^{\alpha-1}\Big(\begin{array}{c}u\\0\end{array}\Big)+\ep^{\alpha}\int_{-\tfrac 12}^{x_3}{d(x',s)\,ds},$$
with $d\in C^{\infty}_c(\Omega;\R^3)$, then $(\tilde{y}^{\ep}, \tilde{P}^{\ep})\in\cal{A}(\pep) $ and an adaptation of Theorem \ref{limsupineq} yields
$$\lim_{\ep\to 0}\cal{J}^{\ep}_{\alpha}(\tilde{y}^{\ep},\tilde{P}^{\ep})=\bar{\cal{J}}_{\alpha}(u,\bar{p}).$$
By combining the previous remarks we have
\begin{eqnarray*}
\cal{J}_{\alpha}(u,v,p)\geq\bar{\cal{J}}_{\alpha}(u,\bar{p})=\lim_{\ep\to 0}\cal{J}^{\ep}_{\alpha}(\tilde{y}^{\ep},\tilde{P}^{\ep})\geq\lim_{\ep\to 0}\cal{J}^{\ep}_{\alpha}({y}^{\ep},{P}^{\ep}).
\end{eqnarray*}
The conclusion follows now by Theorem \ref{minpts}.}}
\end{proof}
We conclude this section by providing an example where, if $H_D$ is homogeneous of degree one, the $\Gamma$-limit $\cal{J}_{\alpha}$ reduces to $\hat{\cal{J}}_{\alpha}$, that is the limit model depends just on the out-of-plane displacement and the first order moment of the plastic strain. 
\begin{teo}
Assume the function $H_D$ to be homogeneous of degree one, i.e.,
\begin{equation}
\label{1hom}
H_D(\lambda \xi) = |\lambda|H_D(\xi)\quad\text{for every }\lambda\in\R,\, \xi\in\mthree.
\end{equation} 
Under the hypothesis of Theorem \ref{minpts}, if $\alpha> 3$, $p^0=x_3\hat{p}^0$, with $\hat{p}^0\in L^{\infty}(\omega;\md)$, and $u^0=0$
then, denoting by $\puno$ the first order moment of the limit plastic strain $p$, the pair $(v,\puno)$ is a minimizer of $\hat{\cal{J}}_{\alpha}$ and
$$\lim_{\ep\to 0}\cal{J}^{\ep}_{\alpha}(y^{\ep},P^{\ep})=\frac{1}{12}\hat{\cal{J}}_{\alpha}(v,\puno).$$
\end{teo}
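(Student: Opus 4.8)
The proof will follow the scheme of Theorem~\ref{teoz}, with the zeroth moment of the plastic strain replaced by its first moment. The first step is the lower bound $\cal{J}_\alpha(u,v,p)\ge\tfrac1{12}\hat{\cal{J}}_\alpha(v,\puno)$ for every $(u,v,p)\in\cal{A}(u^0,v^0)$. Writing $p=\pzero+x_3\puno+\pdue$ and invoking the two-dimensional representation of $\cal{J}_\alpha$ recalled before Theorem~\ref{teoz}, one discards the non-negative contributions $\intomm{Q_2(\sym\nabla'u-\pzero')}$, $\intom{Q_2(\pdue')}$, $\intomm{\B{\pzero}}$ and $\intom{\B{\pdue}}$, so that it remains to prove $\intom{H_D(p-p^0)}\ge\tfrac1{12}\intomm{H_D(\puno-\puno^0)}$. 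Since $p^0=x_3\puno^0$ one has $\puno-\puno^0=12\intt{x_3(p-p^0)}$; because $|x_3|\le1$ on $\big(-\tfrac12,\tfrac12\big)$, hypothesis \eqref{1hom} gives $H_D(x_3 w)=|x_3|\,H_D(w)\le H_D(w)$, and Jensen's inequality for the convex function $H_D$ against the probability measure $dx_3$ then yields, for a.e.\ $x'\in\omega$,
\[
\intt{H_D\big((p-p^0)(x',x_3)\big)}\ \ge\ H_D\!\Big(\intt{x_3\,(p-p^0)(x',x_3)}\Big)\ =\ \tfrac1{12}\,H_D\big(\puno(x')-\puno^0(x')\big);
\]
integrating over $\omega$ gives the claim, so $\cal{J}_\alpha(u,v,p)\ge\tfrac1{12}\min\hat{\cal{J}}_\alpha$ for every admissible triple.

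The second step is to show this bound is sharp, by a recovery sequence. Fix any $(w,q)$ admissible for $\hat{\cal{J}}_\alpha$; after the usual density reductions assume $w\in W^{2,\infty}(\omega)$ and $q$ smooth, and --- since $u^0=0$ --- take the in-plane displacement to vanish. As the first-moment analogue of the construction in Theorem~\ref{teoz}, set $\tilde P^\ep:=\exp(\ep^{\alpha-1}x_3 q)$, which satisfies $\det\tilde P^\ep\equiv1$ (because $x_3 q\in\md$) and lies in $K$ for $\ep$ small, and let $\tilde y^\ep$ be the Kirchhoff--Love deformation of the proof of Theorem~\ref{limsupineq} with in-plane displacement $0$, out-of-plane displacement $w$, and third-order corrector $d\in C^\infty_c(\Omega;\R^3)$. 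Then $(\tilde y^\ep,\tilde P^\ep)\in\cal{A}_\ep(\pep)$, $u^\ep\to0$ and $v^\ep\to w$, and arguing as in the proof of Theorem~\ref{limsupineq} --- via \eqref{quadrwel} for the elastic energy, \eqref{prh4} for the hardening energy, and \eqref{triang}, \eqref{prd2} together with \eqref{1hom} and the identity $p^0=x_3\puno^0$ for the dissipation --- one checks $\lim_{\ep\to0}\cal{J}^\ep_\alpha(\tilde y^\ep,\tilde P^\ep)=\tfrac1{12}\hat{\cal{J}}_\alpha(w,q)$; taking the infimum over $(w,q)$ and using Theorem~\ref{liminfineq} gives $\inf_{\cal{A}_\ep(\pep)}\cal{J}^\ep_\alpha\le\tfrac1{12}\min\hat{\cal{J}}_\alpha+o(1)$.

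Finally, Theorem~\ref{minpts} provides a subsequence along which $u^\ep\to u$, $v^\ep\to v$, $p^\ep\to p$ with $(u,v,p)\in\cal{A}(u^0,v^0)$ a minimiser of $\cal{J}_\alpha$ and $\lim_{\ep\to0}\cal{J}^\ep_\alpha(y^\ep,P^\ep)=\cal{J}_\alpha(u,v,p)$; combining this with \eqref{defam} and the two previous steps produces
\[
\tfrac1{12}\min\hat{\cal{J}}_\alpha\ \ge\ \lim_{\ep\to0}\cal{J}^\ep_\alpha(y^\ep,P^\ep)\ =\ \cal{J}_\alpha(u,v,p)\ \ge\ \tfrac1{12}\hat{\cal{J}}_\alpha(v,\puno)\ \ge\ \tfrac1{12}\min\hat{\cal{J}}_\alpha,
\]
in which every inequality must be an equality; hence $(v,\puno)$ minimises $\hat{\cal{J}}_\alpha$ and $\lim_{\ep\to0}\cal{J}^\ep_\alpha(y^\ep,P^\ep)=\tfrac1{12}\hat{\cal{J}}_\alpha(v,\puno)$. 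The step I expect to be the main obstacle is the dissipation: in the lower bound one must compress the genuinely $x_3$-dependent quantity $\intom{H_D(p-p^0)}$ into the averaged, purely two-dimensional $\tfrac1{12}\intomm{H_D(\puno-\puno^0)}$ --- precisely where the evenness and $1$-homogeneity of $H_D$ in \eqref{1hom} and the pure first-moment form $p^0=x_3\puno^0$ enter --- while in the recovery step one must check that the rescaled dissipation of the constructed sequence converges to exactly this averaged value, so that the lower bound is saturated; this is the first-moment counterpart of the Jensen argument used in the proof of Theorem~\ref{teoz}.
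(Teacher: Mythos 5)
Your two steps are exactly the paper's: the lower bound via $H_D(w)\ge |x_3|H_D(w)=H_D(x_3w)$ followed by Jensen, and the upper bound via the recovery pair $\tilde P^\ep=\exp(\ep^{\alpha-1}x_3 q)$ with a Kirchhoff--Love deformation having zero in-plane part, closed by the chain of inequalities through Theorem \ref{minpts} as in Theorem \ref{teoz}. Your version is if anything slightly more complete than the paper's, since you run the recovery construction for an arbitrary competitor $(w,q)$ of $\hat{\cal J}_\alpha$ rather than only for $(v,\puno)$, which is what is really needed to conclude that $(v,\puno)$ is a minimizer.

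However, the step you yourself single out as the main obstacle --- checking that $\lim_{\ep\to0}\cal J^\ep_\alpha(\tilde y^\ep,\tilde P^\ep)=\tfrac1{12}\hat{\cal J}_\alpha(w,q)$ --- does not follow by ``arguing as in Theorem \ref{limsupineq}'', and you should not wave it through. Theorem \ref{limsupineq} applied to the triple $(0,w,x_3q)$ gives, for the scaled dissipation, the limit $\intom{H_D(x_3q-x_3\puno^0)}=\big(\int_{-1/2}^{1/2}|x_3|\,dx_3\big)\intomm{H_D(q-\puno^0)}=\tfrac14\intomm{H_D(q-\puno^0)}$, because $H_D$ is $1$-homogeneous; only the quadratic terms $Q_2$ and $B$ pick up the factor $\int_{-1/2}^{1/2}x_3^2\,dx_3=\tfrac1{12}$. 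So the recovery sequence produces $\tfrac1{12}$ of the elastic and hardening parts of $\hat{\cal J}_\alpha(w,q)$ but $\tfrac14$ of its dissipation part, which exceeds $\tfrac1{12}\hat{\cal J}_\alpha(w,q)$ whenever $q\ne\puno^0$ on a set of positive measure, while the Jensen lower bound only delivers $\tfrac1{12}$ (at best $\tfrac16$, using $2|x_3|\le1$). The chain of inequalities therefore does not close unless the limiting dissipation vanishes. To be fair, this is precisely what the paper's own proof also asserts without computation (``an adaptation of Theorem \ref{limsupineq} yields\dots''), so you have faithfully reproduced the intended argument; but as written, neither your proposal nor the paper justifies the coefficient $\tfrac1{12}$ on the term $\intomm{H_D(\puno-\puno^0)}$, and this is a genuine gap rather than a routine verification.
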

\begin{proof}
By Jensen inequality and \eqref{1hom} we deduce, 
$$\intom{H_D(p-p^0)}\geq \intom{|x_3|H_D(p-p^0)}=\intom{H_D(x_3p-x_3p^0)}\\
\geq \tfrac{1}{12}\intomm{H_D(\puno-\puno^0)},$$
which in turn implies {{
$$\cal{J}_{\alpha}(u,v,p)\geq \frac{1}{12}\hat{\cal{J}}_{\alpha}(v,\puno).$$
On the other hand, by setting
$$\tilde{P}^{\ep}:=\exp{(\ep^{\alpha-1}x_3\hat{p})}$$
and 
$$\tilde{y}^{\ep}:=\Big(\begin{array}{c}x',\\\ep x_3\end{array}\Big)-\ep^{\alpha-1}x_3\Big(\begin{array}{c}\nabla' v\\0\end{array}\Big)+\ep^{\alpha-2}\Big(\begin{array}{c}v\\0\end{array}\Big)+\ep^{\alpha}\int_{-\tfrac 12}^{x_3}{d(x',s)\,ds},$$
with $d\in C^{\infty}_c(\Omega;\R^3)$, an adaptation of Theorem \ref{limsupineq} yields
$$\lim_{\ep\to 0}\cal{J}^{\ep}_{\alpha}(\tilde{y}^{\ep},\tilde{P}^{\ep})=\frac{1}{12}\hat{\cal{J}}_{\alpha}(v,\hat{p}).$$
The conclusion follows now arguing as in the proof of Theorem \ref{teoz}.}}
\end{proof}

\noindent
\textbf{Acknowledgements.}
I warmly thank Maria Giovanna Mora for having proposed to me the study of this problem and for many helpful and stimulating discussions and suggestions.\\
This work was partially supported by MIUR under PRIN 2008. 
\bigskip

\end{document}